\definecolor[named]{linkcolor}{RGB}{0,26,153}
\numberwithin{equation}{section}
\theoremstyle{plain}
\newtheorem{theorem}{Theorem}[section]
\newtheorem{lemma}[theorem]{Lemma}
\newtheorem{proposition}[theorem]{Proposition}
\theoremstyle{remark}
\newtheorem{remark}[theorem]{Remark}
\theoremstyle{definition}
\newtheorem{definition}[theorem]{Definition}
\newcommand{\N}{\mathbb{N}}
\newcommand{\Z}{\mathbb{Z}}
\newcommand{\R}{\mathbb{R}}
\renewcommand{\P}{\mathbb{P}}
\newcommand{\E}{\mathbb{E}}
\renewcommand{\L}{\mathbb{L}}
\newcommand{\ind}{\mathbbm{1}}
\newcommand{\bblock}{\mathtt{block}}
\newcommand{\ttime}{\mathrm{time}}
\newcommand{\dens}{\mathrm{dens}}
\newcommand{\reff}{\mathrm{ref}}
\newcommand{\FP}{\mathrm{FP}}
\newcommand{\couple}{\mathrm{couple}}
\newcommand{\spread}{\mathrm{spread}}
\newcommand{\Pois}{\mathrm{Pois}}
\newcommand{\conf}{\mathrm{conf}}
\newcommand{\maximal}{\mathrm{max}}
\newcommand{\mart}{\mathrm{mart}}
\DeclareMathOperator{\Supp}{Supp}
\DeclareMathOperator{\Var}{Var}
\DeclarePairedDelimiter{\norm}{\lVert}{\rVert}
\DeclarePairedDelimiter{\nnorm}{\big\lVert}{\big\rVert}
\DeclarePairedDelimiter{\Norm}{\Big\lVert}{\Big\rVert}
\DeclareMathOperator*{\argsup}{arg\,sup}
\DeclareMathOperator*{\arginf}{arg\,inf}
\begin{document}

\title{Ancestral lineages for a branching annihilating random walk}

\author{Pascal Oswald}
\date{}

\maketitle

\begin{abstract}
  We study ancestral lineages of individuals of a stationary discrete-time branching annihilating random walk (BARW) on the $d$-dimensional lattice $\Z^d$. Each individual produces a Poissonian number of offspring with mean $\mu$ which then jump independently to a uniformly chosen site with a fixed distance $R$ of their parent. By interpreting the ancestral lineage of such an individual as a random walk in a dynamical random environment, we obtain a law of large numbers and a functional central limit theorem for the ancestral lineage.
\end{abstract}



\section{Model and main result}

We reconsider a model of discrete-time branching annihilating random walk (BARW) on $\Z^d$, $d\ge 1$, that was first examined in \cite{BARW}. There, model-specific parameter regimes were identified for which the BARW survives with positive probability and for which it exhibits a unique non-trivial ergodic equilibrium. Building upon these results, we are interested in the long-term statistical properties of the position of a single individual's ancestors in a population evolving as a stationary BARW. Understanding such properties for locally self-regulating models, of which the BARW is an example, is not only compelling from a purely mathematical point of view. In the context of mathematical ecology, for instance, it is natural to translate questions on the spatial distribution of ``types'' in a population into questions on the spatial embedding of genealogies of sampled individuals, cf.\ \cite{MR2759587}. See also \cite{etheridge2023looking} for recent results in this direction (among others) regarding a broad class of models of locally regulated populations.

Our main results are that  ancestral lineages satisfy a law of large numbers as well as an annealed central limit theorem, cf.\ Theorem~\ref{thm:main} below.

The main tool in proving these results is a renormalisation construction, together with a result from \cite{RWancestry2016BirknerCernyDepperschmidt}. In \cite{RWancestry2016BirknerCernyDepperschmidt} Birkner, \v{C}ern\'{y} and Depperschmidt develop an abstract programme,
which allows to study ancestral lineages of spatial population models with local self-interactions lying in the universality class of oriented percolation, via a renewal argument. More precisely, the authors work out conditions that imply a law of large numbers and an annealed central limit theorem for the spatial paths of ancestral lineages for individuals drawn from a stationary population. We check that the BARW satisfies these conditions by exploiting and adapting the renormalisation construction that was used to show the existence of a unique stationary law in \cite{BARW}.

Let us now introduce the model. We study a discrete-time Markov process $\eta$ with state space $\{0,1\}^{\Z^d}$ and denote by $\eta_n(z)$ the state
of site $z \in \Z^d$ at time $n \in \N$ (later, when dealing with the stationary process, we take $n \in \Z$). We interpret $\eta_n(z)=1$ as the site $z$ being occupied
by a single particle at time $n$ and $\eta_n(z)=0$ as the site being vacant. In order to describe the dynamics of $\eta$ we introduce the following notational conventions: we write $\norm{ \,\cdot\, }$ for the uniform norm on $\Z^d$ and let
${B_R(z) = \{ x \in \Z^d : \norm{ z-x} \le R \}}$ be the $d$-dimensional
ball (box) of radius $R \in \N$ centred at $z\in \Z^d$.
Moreover, we set $V_R:=2R+1$ to be its side length, so that the volume of $B_R(z)$ is $V_R^d$.

For fixed $R \in \N$, $\mu >0$, and an initial particle configuration
$\eta_0 \in \{0,1\}^{\Z^d}$, the configurations $\eta_n$ at times $n\ge 1$ are
obtained recursively through a three-step procedure. Let
$z \in \Z^d$ be such that $\eta_n(z)=1$. Then, in a first step, the particle at site $z$ dies and gives birth to a Poisson number of offspring with mean $\mu$. Secondly, each offspring moves independently to a uniformly chosen site in $B_R(z)$. Lastly, whenever there are two or more particles at a given site, then all the particles at that site are removed, i.e.\ annihilated.
The particles remaining after the annihilation step make up the configuration $\eta_{n+1}$.

The thinning and superposition properties of the Poisson distribution
give the following equivalent description of the model.
For any configuration $\eta \in \{ 0,1 \}^{\Z^d}$ and $z \in \Z^d$, define first
the (local) density of particles at $z$ by
\begin{equation}
  \label{eqn:density}
  \delta_R(z; \eta) := V_R^{-d} \sum_{x \in B_R(z)} \eta(x).
\end{equation}
Then, in order to get from a configuration $\eta_n$ to $\eta_{n+1}$ we fix $\eta_n \in \{0,1\}^{\Z^d}$ and denote by $N_{n+1}(z)$ the number of newborn particles at $z$ in generation $n+1$ after the dispersal step but before annihilation has occurred. This number is given by the superposition of the offspring of all particles that can move to $z$ (that is, of all $x \in B_R(z)$ with $\eta_n(x)=1$). Thus, using the notation of \eqref{eqn:density}, $N_{n+1}(z)$ is a Poisson random variable with mean $\mu \delta_R(z; \eta_n )$. Taking the annihilation into account, it then holds that
\begin{equation}
  \label{eqn:next_gen_barw_0}
 \eta_{n+1}(z) =
 \begin{cases} 1 \quad \text{if } N_{n+1}(z) = 1, \\
 0 \quad \text{otherwise.}
 \end{cases}
 \end{equation}
Let
\begin{equation}
  \label{eqn:varphi}
  \varphi_{\mu}(w) := \mu w\, e^{-\mu w}, \quad w \in [0,\infty)
\end{equation}
denote the probability that a Poisson random variable with mean $\mu w$
equals 1. By construction, the random variables in the family
$(\eta_{n+1}(z): z \in \Z^d)$ are conditionally independent given $\eta_n$
and by \eqref{eqn:next_gen_barw_0}, \eqref{eqn:varphi} we can represent
our system as
\begin{equation}
  \label{eqn:next_gen_barw}
  \eta_{n+1}(z) =
  \begin{cases} 1 \quad \text{with probability }
    \varphi_{\mu}( \delta_R(z; \eta_n)), \\
    0 \quad \text{otherwise}.
  \end{cases}
\end{equation}
This gives a representation of $\eta$ as a particular example of a probabilistic cellular automaton (PCA), see e.g.\ \cite{mairesse2014around} for an introduction to PCA. Such PCA can be seen as discrete-time counterparts to interacting particle systems, in which the entire system updates ``in parallel'', as opposed to ``sequentually'' as is true for interacting particle systems. 

\begin{remark}
\begin{enumerate}[label = (\alph*)]
    \item  The representation \eqref{eqn:next_gen_barw} relies on the offspring distribution being Poissonian and is not possible for a non-Poissonian choice of offspring distributions. Moreover, the constructions used subsequently depend very delicately on properties of $\varphi_\mu$.  We refer to \cite[Section~1.2]{BARW} for a detailed discussion of the model assumptions and possible generalisations. 
    \item The BARW is non-monotone/non-attractive in the sense of particle systems. That is, adding more particles to a given generation, does not guarantee an increase in the number of particles in the succeeding generation as a higher number of particles leads to more annihilation. A consequence of this seemingly simple fact is that many tools for monotone systems (e.g.\ comparisons using coupling, subadditivity arguments) are not directly applicable.
\end{enumerate}
\end{remark}
In \cite{BARW} the existence of (non-trivial) invariant measures for the BARW is examined (note that by \eqref{eqn:next_gen_barw} the \emph{empty configuration} $\mathbf{0}\in \{0,1\}^{\Z^d}$ is always an absorbing state). We summarise the relevant statements for the current objective in the following proposition.

\begin{proposition}[{Survival and complete convergence, \cite{BARW}}]\label{prop:survivalandcc} Let $\mu\in (1,e^2)$.
    \begin{enumerate}[label =(\roman*)]
        \item There exists $R_\mu$ such that for any $R>R_\mu$ the process $\eta$ survives with positive probability and has a unique non-trivial invariant extremal distribution $\nu_{\mu,R}$.
        \item Conditioned on non-extinction, the law of $\eta_n$ converges to $\nu_{\mu,R}$ in the weak topology.
    \end{enumerate}
\end{proposition}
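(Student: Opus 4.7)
The plan is to establish both assertions via a block-renormalisation construction that couples the BARW to a supercritical oriented percolation on a coarse-grained space-time lattice. The relevant scale is set by the non-trivial fixed point $w^\ast=\ln(\mu)/\mu$ of the mean-field map $w\mapsto \varphi_\mu(w)$. The assumption $\mu\in(1,e^2)$ enters twice: for $\mu>1$ one has $w^\ast>0$ and the trivial fixed point $0$ is unstable; for $\mu<e^2$ one has $|\varphi_\mu'(w^\ast)|=|1-\ln\mu|<1$, so that $w^\ast$ attracts a neighbourhood under iteration of $\varphi_\mu$. Large $R$ will enter through spatial averaging in \eqref{eqn:density}, so that the conditionally independent updates in \eqref{eqn:next_gen_barw} concentrate sharply around their means as $R\to\infty$.

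For part~(i), I would partition space-time into meso-scopic blocks of side $L\gg R$ and declare a block \emph{good} if, in a prescribed sub-region, the empirical density on every $R$-ball lies within $\varepsilon$ of $w^\ast$. Chernoff bounds on the Poissonian thinning representation, combined with one step of the local stability of $\varphi_\mu$ near $w^\ast$, should show that a good block at time $n$ produces a good block at each of its forward-neighbours at time $n+1$ (in a suitable oriented graph) with probability $1-o(1)$ as $R\to\infty$ with $\varepsilon$ shrinking at a compatible rate. A Durrett-type comparison theorem then dominates the family of good blocks from below by a supercritical oriented percolation cluster, which survives. This gives positive survival probability of the BARW and, by averaging Cesàro-style in time on the survival event, produces a non-trivial invariant measure, which one takes for $\nu_{\mu,R}$.

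For extremality, uniqueness, and part~(ii), I would run two copies $(\eta_n)$ and $(\tilde\eta_n)$ of the dynamics coupled through common Poisson noise in \eqref{eqn:next_gen_barw_0}: on a jointly good block the difference $|\delta_R(z;\eta_n)-\delta_R(z;\tilde\eta_n)|$ is contracted by approximately $|\varphi_\mu'(w^\ast)|<1$ after one step, so discrepancies inside the good-block cluster decay geometrically. Combining this contraction with the oriented-percolation comparison from (i), one obtains that on the joint-survival event the marginal laws of the two processes restricted to any finite window merge as $n\to\infty$. Choosing $\tilde\eta_0$ distributed according to $\nu_{\mu,R}$ then yields the complete convergence statement (ii), and choosing $\eta$ and $\tilde\eta$ both stationary gives uniqueness of the non-trivial extremal invariant law.

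The main obstacle is the non-monotonicity of the BARW flagged in the remark: an increase in $\eta$ need not produce an increase in $\eta_{n+1}$, so there is no $\le$-coupling to fall back on, and the "good block" event has to rule out both under- and over-populated configurations simultaneously. The upper bound $\mu<e^2$ is precisely what lets one step of $\varphi_\mu$ suppress both failure modes after coarse-graining. The bulk of the technical work lies in quantifying the propagation probability of good blocks — Chernoff estimates on $V_R^d$ weakly dependent Bernoulli coordinates, careful control of the boundary interactions between adjacent blocks, and matching the parameters $(L,R,\varepsilon)$ so that the induced oriented percolation is close enough to saturation for Durrett's comparison to apply.
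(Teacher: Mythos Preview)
The paper does not give a proof of this proposition --- it is quoted as a result of \cite{BARW}. That said, your sketch broadly matches the \cite{BARW} strategy as recapitulated in Sections~\ref{sec:renormalisation}--\ref{sec:proof_main}: your $w^\ast$ is the paper's $\theta_\mu$, the contraction near the fixed point is Lemma~\ref{lem:varphi_properties}(c), concentration of local densities via Bernstein-type bounds is Lemma~\ref{lem:U_k}, and the coupling of two copies through common noise is exactly the flow construction \eqref{eqn:flow_construction}.

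One place your outline is too schematic is the definition of a good block. Declaring a block good whenever ``every $R$-density lies within $\varepsilon$ of $w^\ast$'' leaves no mechanism for propagating goodness across the spatial boundary into neighbouring blocks in a non-monotone system: densities just outside the controlled region are entirely unconstrained and feed back into it after one step. The actual construction (Definition~\ref{def:density_profiles} and \eqref{eqn:new_zeta_minus}--\eqref{eqn:new_zeta_plus}) uses graded comparison density profiles $\zeta^{R,\pm}_k$ with a deterministic staircase front, built from sequences $\alpha_m\uparrow\theta_\mu$, $\beta_m\downarrow\theta_\mu$ satisfying $\varphi_\mu([\alpha_m,\beta_m])\subseteq(\alpha_{m+1},\beta_{m+1})$ (Lemma~\ref{lem:varphi_properties}(d)), so that the tightly controlled region expands by a fixed amount $\lceil sR\rceil$ per time step while the front buffers the uncontrolled exterior. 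This front structure, and the verification that such profiles are compatible with one step of the dynamics (property~\ref{profile:iii} in Definition~\ref{def:density_profiles}), is the main technical ingredient you have elided; without it the ``good block propagates to its forward neighbours with high probability'' step does not go through.
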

We are only interested in the parameter regime for which there is a unique non-trivial extremal invariant distribution. Therefore, we only consider $\mu \in (1,e^2)$ and $R>R_\mu$ in the rest of the paper. By doing so, the existence of $\nu_{\mu,R}$ is always guaranteed by  Proposition~\ref{prop:survivalandcc}. 

Let us now introduce the main object of interest, namely the ancestral lineages of single particles in a BARW. We consider the stationary process $\eta = (\eta_n)_{n \in \Z}$ such that for each $n \in \Z$, $\eta_n$ is distributed as $\nu_{\mu,R}$. It is clear from the informal description of the model, cf.\ the paragraph before \eqref{eqn:density}, that the model can be enriched with genealogical information, by relating child and parent particles.

For simplicity of notation, we condition $\eta$ on having a particle at the space-time origin and always consider the ancestral lineage of the particle at the space-time origin.
Conditioned on $\{\eta_0(0) = 1\}$, the dynamics of the ancestral lineage of the particle at the origin are  described by the time-inhomogeneous Markov chain $X = (X_k)_{k\in \N_0}$ given by
\begin{equation}\label{def:RW}
    X_0 = 0,\quad \text{and}\quad \P(X_{k+1} = y | X_k = x,\eta) = p_{\eta}(k;x,y), \quad k\ge 1
\end{equation}
where the transition probabilities $p_\eta(k;x,y)$ are given by
\begin{equation}
\label{eqn:randomwalk}
    p_{\eta}(k;x,y) :=\frac{\eta_{-k-1}(y)}{\sum_{z\in B_R(x)}\eta_{-k-1}(z)}.
\end{equation}
Indeed, contingent on the random walk being at site $x$ at time $k$, for any $y \in B_R(x)$ the number of particles sent from $y$ to $x$ is $\Pois(\mu V_R^{-d})$ distributed, conditional on the total sum over all $y \in B_R(x)$ being equal to one. Now, since a vector of independent Poisson random variables, conditioned on the total size of its sum, has a multinomial distribution, it follows readily that a particle ``selects'' its predecessor uniformly among all particles alive one generation earlier  which are within distance $R$. That is, the transition kernel in \eqref{def:RW} can be written as in \eqref{eqn:randomwalk}.

The random walk $X$ defined in \eqref{def:RW} is a random walk in the (relatively complicated) random environment $\eta$ and describes the space-time embedding of ancestral lineages of particles in $\eta$. Hence, the randomness of $X$ comes solely from the genealogy of the particle. We stress, moreover, that the forwards in time direction of the random walk, corresponds to the backwards in time direction of the environment $\eta$. 

Our main result states that $X$ satisfies a law of large numbers and a central limit theorem when averaging over the genealogical randomness (i.e.\ randomness due to $X$ taking steps) and the randomness in the environment. To this end, we write $P_\eta$ for the conditional law of $\P$, given $\eta$, so that $p_\eta(k;x,y) = P_\eta(X_{k+1}=y | X_k = x)$. Moreover, we $\P_0(\cdot) := \P( \cdot | \eta_0(0) = 1)$ and write $\E_0$ and $E_\eta$ for the corresponding expectations.

\begin{theorem}\label{thm:main}
For $\mu \in (1,e^2)$ there exists $\widetilde R_\mu\ge R_\mu$ such that for $R \ge \widetilde R_\mu$ the random walk as defined in \eqref{def:RW}--\eqref{eqn:randomwalk} satisfies
    \begin{equation}
        \label{eqn:lln}
        P_\eta \big(k^{-1}X_k \stackrel{ k\to \infty}{\longrightarrow}0 \big) = 1 \quad \text{for}\quad \P_0(\cdot )-\text{a.a. } \eta.
    \end{equation}
    Moreover for any $g \in C_b(\R^d)$
    \begin{equation}
        \label{eqn:clt}
    \E_0\big[g(k^{-1/2}X_k)\big] \stackrel{ k\to \infty}{\longrightarrow} \E[g(Z)], 
    \end{equation}
    where $Z$ is a (non-degenerate) centred isotropic $d$-dimensional normal random variable, i.e.\ $Z \sim \mathcal{N}(0,\sigma^2I)$ for some $\sigma^2>0$, where $I$ is the identity matrix.  Moreover, a functional version of \eqref{eqn:clt} holds as well.
\end{theorem}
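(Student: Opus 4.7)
The plan is to verify the abstract conditions of the Birkner--\v{C}ern\'{y}--Depperschmidt framework \cite{RWancestry2016BirknerCernyDepperschmidt}, which reduces a quenched LLN and annealed CLT for ancestral lineages to four ingredients: (A) quantitative decoupling of the environment $\eta$ over distant space-time regions, (B) a renormalisation in which ``good'' space-time blocks dominate a supercritical oriented percolation, (C) a uniform lower bound on the transition kernel $p_\eta(k;x,\cdot)$ restricted to good regions, and (D) a non-degeneracy input for the limiting covariance. Given these, BCD construct joint regeneration times for the pair $(X,\eta)$ with good moments, from which both statements of Theorem~\ref{thm:main} and its functional version follow by the classical arguments for sums of i.i.d.\ increments (SLLN and Donsker).

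The main analytical input is the multi-scale renormalisation developed in \cite{BARW}. In that construction one calls a space-time block good if the BARW restricted to it survives in a controlled way and leaves behind a sufficiently dense trace in a designated output sub-block. I would adapt this in three ways. First, I would carry the scales high enough to obtain a stretched-exponential bound on the probability of a scale-$k$ bad block, which yields the decoupling of (A). Second, I would re-run the construction under $\P_0$: conditioning on $\eta_0(0)=1$ is a local perturbation and, for $R$ large, should only affect finitely many initial scales, hence inherit the same asymptotic control. Third, I would enrich the notion of goodness to include quantitative lower bounds on $\delta_R(\,\cdot\,;\eta)$ along all space-time points inside the block, which is exactly what supplies (C): on a good block, every step of the walk selects uniformly among a set of occupied predecessors whose cardinality is at least a fixed positive fraction of $V_R^d$.

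With (A)--(C) in place, the BCD recipe produces joint regeneration times $0=\tau_0<\tau_1<\tau_2<\cdots$ satisfying $\E_0[\tau_1^{2+\delta}]<\infty$ and $\E_0[\lvert X_{\tau_1}\rvert^{2+\delta}]<\infty$, together with an i.i.d.\ decomposition of the increments $(X_{\tau_{j+1}}-X_{\tau_j},\tau_{j+1}-\tau_j)$. The symmetries of the one-step kernel under the subgroup of lattice isometries fixing $0$ extend to the annealed law $\P_0$, so $X_{\tau_1}$ is isotropic; this forces zero drift (giving the quenched LLN \eqref{eqn:lln} via the ergodic theorem applied to the renewal process) and a covariance of the form $\sigma^2 I$ (giving the form of $Z$ in \eqref{eqn:clt}). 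Non-degeneracy $\sigma^2>0$ comes from the uniform ellipticity of (C) together with the fact that on a positive-probability event the first regeneration increment already spans a coordinate direction; the functional version follows from Donsker applied to the i.i.d.\ blocks plus a tightness argument for the non-regeneration residuals.

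I expect the main obstacle to lie in the renormalisation itself, namely in adapting the machinery of \cite{BARW}---originally tuned to prove survival and uniqueness of $\nu_{\mu,R}$---so that it \emph{simultaneously} delivers (i) stretched-exponential decoupling of $\eta$ at separated space-time locations, (ii) lower bounds on local densities along all potential walker trajectories, and (iii) a joint coupling in which walker and environment can be re-started together at the regeneration times. The non-monotonicity of the BARW (cf.\ the remark above) rules out any stochastic-domination shortcut, so each inductive step must be closed by an explicit coupling of two independent copies of the dynamics, and these couplings must be compatible with the walker's one-step choice of parent as in \eqref{eqn:randomwalk}. Once this technical core is in place, everything else reduces to the abstract BCD framework.
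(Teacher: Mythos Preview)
Your overall strategy---verify the hypotheses of \cite[Theorem~3.1]{RWancestry2016BirknerCernyDepperschmidt} by adapting the renormalisation of \cite{BARW}---is exactly what the paper does. But two of the ingredients you list do not match what the BCD framework actually requires, and the technical core you anticipate is not where the work lies. First, the construction is \emph{single-scale}, not multi-scale: one fixes $L_s\sim R\log R$, $L_t\sim\log R$ and shows that a single coarse-grained block is good with probability $\ge 1-\varepsilon$; there is no scale induction and no stretched-exponential decoupling to establish. The decoupling you call (A) is obtained structurally, by expressing goodness through a pair $(G_\conf,G_U)$ where $G_U$ is a set of \emph{driving-noise} realisations (hence i.i.d.\ up to finite-range overlap) with the property that \emph{any} two configurations locally in $G_\conf$, run under noise in $G_U$, couple on the output region. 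Making goodness configuration-independent in this sense is the genuine refinement over \cite{BARW} (whose good-block notion depended on the specific configuration at the bottom), and is the piece you would actually have to carry out; your proposal does not isolate it.

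Second, your condition (C)---uniform ellipticity of $p_\eta$ via $R$-density lower bounds---is not the relevant walker hypothesis and would not by itself close the argument. What BCD require (Assumption~3.9 there, Proposition~\ref{prop:speed_bound} here) is that on a good block the walker, started in the middle of the top, with high probability \emph{stays inside} the block throughout its $L_t$ steps. Ellipticity does not control displacement. The paper gets this by enriching goodness with two-sided control of local $r_0$-densities at a second scale $r_0=R/M$ for a fixed $M$; this makes the one-step conditional drift $O(r_0)$ uniformly on the block, and Azuma--Hoeffding on the martingale part of the increments then confines the total displacement to $L_s/4$. Your discussion of regeneration times, symmetry, and non-degeneracy is correct in spirit but is internal to the BCD theorem; once the block hypotheses above are verified, the paper simply invokes that result.
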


The proof of Theorem~\ref{thm:main} amounts to verifying the applicability of \cite[Theorem~3.1]{RWancestry2016BirknerCernyDepperschmidt}, which gives abstract conditions for a law of large numbers and a central limit theorem for random walks in dynamic random environments to hold. 

\subsubsection*{Related literature}
The questions addressed in \cite{RWancestry2016BirknerCernyDepperschmidt} for a general class of population models with local self-interactions were motivated by earlier work of the same authors. In \cite{Birkner2013backbone} random walks on the backbone of an oriented percolation cluster on $\Z^d,d\ge1$ (which correspond to ancestral lineages of a discrete-time contact process) are considered. A quenched law of large numbers and an annealed central limit theorem similar to Theorem~\ref{thm:main} above are shown. Moreover, the natural question of the behaviour of two random walks with transition probabilities as in \eqref{def:RW}, which corresponds to jointly describing the ancestral lineages of two distinct individuals, is considered. By controlling two copies of the random walk, the annealed central limit theorem was strengthened to a quenched one.
It follows moreover from \cite{Birkner2013backbone} that analogous annealed and quenched central limit theorems hold if one allows the discrete-time contact process to have random i.i.d.\ carrying capacities (i.e.\ every space-time site can carry a random number of particles, instead of just one). This was generalised in \cite{Miller2017weighted} to the case where the carrying capacity is not i.i.d.\ but mixing. It is shown that a quenched law of large numbers and an annealed central limit theorem hold under $\phi$-mixing in time 
(for $\phi_n \in O(n^{-1-\delta})$ resp.\ $\phi_n \in O(n^{-2-\delta})$ for some $\delta>0$) and a quenched central limit theorem under an exponential mixing in space and time, see \cite{Miller2017weighted,miller2017dissertation} for details. For unit carrying capacity, the quenched central limit theorem was recently extended to a quenched \emph{local} limit theorem for $d\ge 3$ in \cite{Bethuelsen2023local} (note that the dimensional constraint seems rectifiable as is commented upon in \cite[Outlook and open questions]{Bethuelsen2023local} ).

Moreover, the abstract programme developed in \cite{RWancestry2016BirknerCernyDepperschmidt} was applied to derive a quenched law of large numbers and an annealed central limit theorem for the  logistic branching random walk first studied in \cite{birkner2007survival} as a population model with logistic local self-regulation.
A comprehensive overview of these and related models can be found in \cite{Birkner2021ancestrallinages}.

\subsubsection*{Organisation of the article}
The rest of the article is organised as follows. In Section~\ref{sec:ancestry}, we formalise and make precise the conditions that need to be checked in order to apply the abstract machinery of \cite{RWancestry2016BirknerCernyDepperschmidt}, comment on why they are needed and prove Theorem~\ref{thm:main}. 
In Section~\ref{sec:renormalisation}, we introduce the main ideas behind the notion of goodness used for the renormalisation construction, which are based on suitable control of local densities of $\eta$. Lastly, Sections~\ref{sec:proof_main} and \ref{sec:proof_speed_prop} contain the proofs that the necessary conditions are indeed met.

\section{Abstract conditions and proof of Theorem~\ref{thm:main}}
\label{sec:ancestry}
Recall that our strategy to prove Theorem~\ref{thm:main} is to check the abstract conditions that let us apply Theorem~3.1 of \cite{RWancestry2016BirknerCernyDepperschmidt}.
These conditions are rather involved and not straightforward to present in isolation. We present them in the following as propositions which need to be proven. The proofs are given in Sections~\ref{sec:proof_main} and \ref{sec:proof_speed_prop}.

The conditions from \cite{RWancestry2016BirknerCernyDepperschmidt} can be divided into two parts. Firstly, into conditions on the random environment $\eta$, in which the random walk $X$ evolves, and secondly, into conditions on the random walk itself.

\subsection{Conditions on the environment}
\label{sec:assumptions_envrionment}
 The first condition on the environment (\cite[Assumption~3.2]{RWancestry2016BirknerCernyDepperschmidt}) is that it is Markovian and admits a ``local flow construction'' that allows to couple the process with different (and arbitrary) initial conditions.

To this end, we make use of an appropriate analogue of the \emph{graphical construction} of interacting particle systems, with which we can view the evolution of $\eta$ as a stochastic flow on its configuration space $\{0,1\}^{\Z^d}$. This offers the advantage of letting us define the process $\eta = (\eta_n)_{n\ge m}$ for all initial conditions $\eta_m \in \{0,1\}^{\Z^d}$, at any starting time $m\in \Z$, simultaneously. More precisely, 
we let $U(x,n)$, $x\in \Z^d$, $n\in \Z$, be a
collection of i.i.d.~uniform random variables on $[0,1]$. 
Then, for any $m \in \Z$ and any initial condition $\eta_m \in \{0,1\}^{\Z^d}$, 
we define, recursively for $n\ge 0$,
\begin{equation}
  \label{eqn:flow_construction}
  \eta_{m+n+1}(x) := \ind_{\{ U(x,m+n+1)
      \le \varphi_{\mu}(\delta_R(x;\eta_{m+n}))\}},
 \end{equation}
where $\delta_R(x;\eta_{m+n})$ is as in \eqref{eqn:density}. Comparing \eqref{eqn:flow_construction} to \eqref{eqn:next_gen_barw} with $m=0$ it follows immediately that the process defined by \eqref{eqn:flow_construction} has the law of the BARW. In this sense, the i.i.d.\ field $(U(x,n))_{(x,n) \in \Z^d \times \Z}$ of $\textrm{Unif}[0,1]$ random variables acts as \emph{driving noise} for the evolution of $\eta$. Moreover, the construction is local, as the value of $\eta$ at any space-time site $(x,n) \in \Z^d\times \Z$ is fully determined by the value $U(x,n)$ and by the values $\{\eta_{n-1}(y) : y \in B_R(x)\}$, and hence Assumption~3.2 of \cite{BARW} is satisfied.

Expanding on the idea of $\eta$ as a stochastic flow on the configuration space, we introduce for $-\infty<m<n$ the $\sigma$-algebras
\begin{equation}
    \label{eqn:sigma_algebra}
    \mathcal{G}_{m,n} := \sigma(U(x,k) : m< k\le n , x \in \Z^d).
\end{equation}
By iterating \eqref{eqn:flow_construction}, we can define a (random) family of $\mathcal{G}_{m,n}$-measurable mappings
\begin{equation}
    \label{eqn:stochastic_flow_1}
    \Phi_{m,n} : \{0,1\}^{\Z^d} \to \{0,1\}^{\Z^d},\quad -\infty<m<n,
\end{equation}
such that $\eta_n = \Phi_{m,n}(\eta_m)$. More precisely, we define for any $\zeta \in \{0,1\}^{\Z^d}$ and $x \in\Z^d$
\begin{equation*}
      \big(\Phi_{m,m+1}(\zeta)\big)(x) := \ind_{\{ U(x,m+1)
      \le \varphi_{\mu}(\delta_R(x;\zeta))\}}
\end{equation*}
and then set
\begin{equation*}
    \Phi_{m,n} := \Phi_{n-1,n}\circ \dots \circ \Phi_{m,m+1}.
\end{equation*}
By using these mappings, the dynamics of $(\eta_n)_{n \ge m}$ defined simultaneously for all initial conditions $\eta_m \in \{0,1\}^{\Z^d}$ and any $m \in \Z$.

The second condition on the environment (\cite[Assumption~3.3]{RWancestry2016BirknerCernyDepperschmidt}) asks that a comparison with supercritical oriented percolation can be made on an appropriately scaled space-time grid, using a very specific notion of ``good'' blocks.
This specific notion of goodness is introduced in Proposition~\ref{prop:main_proposition} below, where it is also stated that an appropriate coarse-graining exists for the BARW. 

To this end, we introduce the following notation. 
For spatial and temporal scales ${L_s,L_t \in \N}$ we consider space-time blocks whose ``bottom parts'' are centred at the points in the coarse-grained grid $\L :=L_s \Z^d\times L_t \Z$. Points in $\L$ are labeled by $\Z^d\times \Z$ such that $(x,n)\in\Z^d\times \Z$ is the label for the point $(L_sx,nL_t)\in\L$. 
For $m\in \N$ and $(x,n) \in \Z^d \times \Z$ we consider blocks
 \begin{equation}
    \label{eqn:new_blocks}
\bblock_m(x,n) := \big\{(y,k) \in \Z^d\times \Z : \norm{y-L_sx}\le m L_s , nL_t < k \le (n+1)L_t\big\}.
\end{equation}
Note that these blocks overlap in the spatial directions but never in the temporal direction. 
We further write for any $A \subseteq \Z^d\times \Z$, $U|_A$ for the restriction of the  field $U$ of driving noise to the set $A$. In particular, this means that for any $\bblock_m(x,n)$, the restriction of $U|_{\bblock_m(x,n)}$ is an element of $[0,1]^{B_{mL_s}(L_sx)\times \{1,\dots, L_t\}}$.

With this notation, we can present the second condition of \cite{RWancestry2016BirknerCernyDepperschmidt} on the environment in form of a proposition, the proof of which is given in Section~\ref{sec:proof_main} below.

\begin{proposition}
\label{prop:main_proposition}
    For any $\mu \in (1,e^2)$ and $\varepsilon>0$ there exists  $\widetilde R_{\mu,\varepsilon}\ge R_\mu$, such that for every $R\ge \widetilde  R_{\mu,\varepsilon}$ there is a spatial scale $L_s$, a temporal scale $L_t$, a set of good (local) configurations $G_\conf \subseteq \{0,1\}^{B_{2L_s}(0)}$ and a set of good (local) driving noise realisations $G_U \subseteq [0,1]^{B_{4L_s}(0)\times \{1,\dots,L_t\}}$ such that
    \begin{equation}
        \label{eqn:main_proposition}
    \P(U|_{\bblock_4(0,0)} \in G_U) \ge 1-\varepsilon
    \end{equation}
    and such that the following contraction and coupling conditions are satisfied: For any $(x,n) \in \Z^d\times \Z$ and any configurations $\eta_{nL_t}^{(1)},\eta_{nL_t}^{(2)} \in \{0,1\}^{\Z^d}$ at time $nL_t$, if $\eta_{nL_t}^{(i)}|_{B_{2L_s}(L_s x)}\in G_\conf$ for $i=1,2$ and $U|_{\bblock_4(x,n)} \in G_U$, then
    \begin{enumerate}[label =(\roman*)]
        \item \label{prop:item1} $\eta_{(n+1)L_t}^{(1)}(y)=\eta_{(n+1)L_t}^{(2)}(y)$ for all $\norm{y-L_sx}\le 3L_s$
        \item \label{prop:item2} $\eta_{(n+1)L_t}^{(1)}|_{B_{2L_s}(L_s(x+e))} \in G_\conf$ for all $e\in B_1(0)$.
    \end{enumerate}
    Moreover, when the $\eta_{nL_t}^{(i)}$'s agree on $B_{2L_s}(L_sx)$ (i.e.\ at the bottom centre of the block), then they agree on all space-time points in $B_{L_s}(L_s x) \times \{1,\dots, L_t\}$.
\end{proposition}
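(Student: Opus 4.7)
The idea is to reuse and strengthen the renormalisation construction from \cite{BARW}. In that paper, goodness of a block was defined in terms of the local density $\delta_R(\,\cdot\,; \eta)$ being close to the non-trivial fixed point $\rho_* \in (0,1)$ of the one-site map $w \mapsto \varphi_\mu(w)$ on a suitable spatial box; this is exactly the kind of $G_\conf$ we need, except that we must carry the construction far enough to track two initial configurations simultaneously. Concretely, I would declare $\zeta \in G_\conf$ iff for every $z \in B_{2L_s}(0)$ the local density $\delta_R(z;\zeta)$ lies in a fixed small interval $I_* \subset (0,1)$ around $\rho_*$ on which $\varphi_\mu$ is a strict contraction, and I would declare $U|_{\bblock_4(0,0)} \in G_U$ iff at every time $k \in \{1,\dots,L_t\}$ and at every $z \in B_{4L_s - kR}(0)$ the empirical fraction
\begin{equation*}
V_R^{-d}\sum_{x \in B_R(z)} \ind_{\{ U(x, k) \le \varphi_\mu(\delta_R(x;\eta_{k-1})) \}}
\end{equation*}
is within a prescribed tolerance of its conditional mean, for \emph{both} trajectories under consideration, and with the implicit fresh uniform sampling built into the flow construction \eqref{eqn:flow_construction}.

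Given this setup, step one is a one-step stability lemma: if $\delta_R(\,\cdot\,;\zeta) \in I_*$ on a box and the empirical fluctuations are controlled, then $\delta_R(\,\cdot\,;\Phi_{m,m+1}(\zeta))\in I_*$ on a slightly smaller box, and moreover the densities of two different such inputs $\zeta^{(1)},\zeta^{(2)}$ get pushed closer together by a factor $\lambda < 1$ dictated by the contraction rate of $\varphi_\mu$ on $I_*$. This step relies crucially on taking $R$ large, so that Bernstein/Chernoff bounds make the Poissonian sampling essentially deterministic per site; this is how \eqref{eqn:main_proposition} is arranged by choosing $R \ge \widetilde R_{\mu,\varepsilon}$ and then $L_t$, with $L_s$ subsequently chosen so that $4L_s \gg R L_t$, guaranteeing that the spatial region shrinks only slightly per step.

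Step two is the coupling that yields part \ref{prop:item1}. Since the flow is driven by the \emph{common} noise field $U$, once the two densities at every site in a box have been driven within a tolerance $\tau$ small enough that both of the thresholds $\varphi_\mu(\delta_R(z;\eta^{(1)}_k))$ and $\varphi_\mu(\delta_R(z;\eta^{(2)}_k))$ lie on the same side of $U(z,k+1)$, the two configurations agree pointwise at time $k+1$. Iterating the contraction $L_t$ times brings the density difference below $\tau$ everywhere in $B_{3L_s}(L_sx)$ (the shrinkage by $R$ per step eats up an annulus of total width $R L_t \le L_s$ out of the initial radius $4L_s$, leaving $3L_s$). Part \ref{prop:item2} is then immediate: the density at every point of any neighbouring $B_{2L_s}(L_s(x+e))$ lies in the common, stabilised range $I_*$, i.e., in $G_\conf$. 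The final \emph{moreover} statement is the easiest: when $\eta^{(1)}_{nL_t}$ and $\eta^{(2)}_{nL_t}$ already agree on $B_{2L_s}(L_s x)$, the recursion \eqref{eqn:flow_construction} together with the range-$R$ locality forces pointwise agreement on $B_{2L_s - kR}(L_s x)$ at time $nL_t + k$, which contains $B_{L_s}(L_s x)$ as soon as $R L_t \le L_s$, independently of whether the noise is in $G_U$.

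The main obstacle is the non-monotonicity noted in the remark: I cannot replace fluctuations of $\delta_R$ by a convenient monotone surrogate, nor use a standard additive coupling. The workaround, as in \cite{BARW}, is to work entirely inside the density window $I_*$ where $\varphi_\mu$ is a contraction and to quantify the contraction by the derivative $\varphi_\mu'$ on $I_*$; combined with a union bound over the $\mathcal{O}(L_s^d L_t)$ sites of $\bblock_4(0,0)$ and Bernstein-type concentration, this requires a careful balancing of $R$, $L_s$ and $L_t$ to simultaneously obtain (a) sufficient contraction per step, (b) a spatial budget $4L_s - RL_t \ge 3L_s$, and (c) the probability bound \eqref{eqn:main_proposition}. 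The delicate bookkeeping of these three inequalities is where the bulk of the work will sit, and it is also the place where the enhancement over \cite{BARW} is needed, since here the good-configuration set must be closed under the \emph{coupled} dynamics of two initial conditions, not just under a single run.
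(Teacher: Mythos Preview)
Your overall intuition---contract densities via $\varphi_\mu$ on a window around the fixed point, use Bernstein to control fluctuations, then couple through the common noise---matches the paper's strategy, and your argument for the final \emph{moreover} clause is correct. However, there is a genuine gap in the geometry of your main argument.

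You claim that iterating the one-step stability lemma $L_t$ times yields agreement on $B_{3L_s}(L_sx)$ because ``the shrinkage by $R$ per step eats up an annulus of total width $RL_t \le L_s$ out of the initial radius $4L_s$, leaving $3L_s$.'' But the hypothesis only gives you $\eta_{nL_t}^{(i)}|_{B_{2L_s}(L_sx)} \in G_\conf$, i.e.\ density control on a box of radius $2L_s$, not $4L_s$ (the radius $4L_s$ is the size of the \emph{noise} box, not the configuration box). With a purely shrinking argument you would end up with control and coupling on a box of radius at most $2L_s - RL_t \le L_s$, which fails both \ref{prop:item1} (requiring radius $3L_s$) and \ref{prop:item2} (requiring goodness on all neighbouring $B_{2L_s}(L_s(x+e))$, i.e.\ effectively radius $3L_s$). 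The whole point of the proposition is that the good region must \emph{expand}, not contract, over one block of time.

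The paper resolves this by working not with a fixed density window $I_*$ but with \emph{comparison density profiles} $\zeta_k^{R,\pm}$ whose fronts have a carefully designed staircase shape (built from the sequences $\alpha_m\uparrow\theta_\mu$, $\beta_m\downarrow\theta_\mu$ of Lemma~\ref{lem:varphi_properties}(d)). These profiles are engineered so that the region of tight density control \emph{spreads outward} by $\lceil sR\rceil$ per time step, for some $s\in(0,1)$; this is the content of Lemma~\ref{lem:zeta_is_cdp} and the construction in \eqref{eqn:new_zeta_minus}--\eqref{eqn:new_zeta_plus}. Accordingly $L_t$ is split into a spreading phase $T^{\mathrm{spread}}$ (during which control propagates from radius $\approx L_s$ out to $4L_s$) followed by a coupling phase $T^{\mathrm{couple}}$; only after the spreading phase does an iteration like your \eqref{eqn:iteration1}--\eqref{eqn:iteration2}-style contraction argument run, and it is bounded not by a deterministic ``same side of $U(z,k+1)$'' criterion but by showing that the probability of \emph{any} discrepancy over all of $B_{3L_s}$ is at most $\kappa(\mu,\varepsilon_{\mathrm{FP}})^{T^{\mathrm{couple}}} V_{3L_s}^d$, which is then absorbed into the definition of $G_U$. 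Without the spreading mechanism, your construction cannot deliver the required radii.
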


For a realisation $\eta$ of the BARW, we call a coarse-grained block $\bblock_4(x,n)$ \emph{good} if $U|_{\bblock_4(x,n)} \in G_U$ and $\eta_{nL_t}|_{B_{2L_s}(L_sx)} \in G_\conf$.
More formally, let for any $(x,n) \in \Z^d\times \Z$
\begin{equation}
    \label{eqn:oriented_percolation}
        \Gamma(x,n) := \ind_{\{ U|_{\bblock_4(x,n)} \in G_U\}}\ind_{\{\eta_{n L_t} |_{B_{2L_s}(L_s x)} \in G_\conf\}},
\end{equation}
then $\bblock_4(x,n)$ is \emph{good} if $\Gamma(x,n) = 1$. This notion of goodness can be viewed as a type of contractivity property of the local dynamics, in the sense that, on a good block, the flow $\Phi$, cf.\ \eqref{eqn:stochastic_flow_1}, tends to merge local configurations. 

\begin{remark}\label{rem:modblocks} 
    The block-construction from \cite{BARW}, which we recapitulate in Section~\ref{sec:blocks:cc}, does not satisfy Proposition~\ref{prop:main_proposition} verbatim. The issue is that, in addition to depending on the driving noise in a box, the notion of ``good block'' that was used there also depends on the \emph{specific} realisation of $\eta$ at the bottom of the block. 
   Nonetheless, we can reuse the ideas of \cite[Sectio~4]{BARW}, cf.\ Section~\ref{sec:proof_main} below, and this technicality can  be remedied by introducing (next to $G_U$) the set $G_\conf$ and by choosing slightly larger scales. The difference in the size of scales is elaborated upon in some more detail in Remark~\ref{rem:new_timescale} after having introduced some more details on the construction of \cite{BARW}.
\end{remark} 

\subsection{Conditions on the random walk}
\label{sec:assumptions_RW}
Let $X$ be the random walk as defined in \eqref{def:RW}, evolving in the dynamic random environment given by the stationary process $\eta$, which is defined as in \eqref{eqn:flow_construction} with parameters $\mu,R$ for which Proposition~\ref{prop:main_proposition} holds for some $\varepsilon>0$ and let $L_s,L_t \in \N$ be the scales corresponding to these parameters.

There are again two conditions in \cite{RWancestry2016BirknerCernyDepperschmidt}, on the random walk $X$, that need to be verified. The first is that if the random walk starts anywhere from the middle half of the top of a good block, i.e.\ a block $\bblock_4(x,n)$  such that $\Gamma(x,n) = 1$, for some $(x,n)\in \Z^d \times \Z$, then with high probability it doesn't cover long distances within the block, cf.\ \cite[Assumption~3.9]{RWancestry2016BirknerCernyDepperschmidt}.
The precise statement that needs to be checked is summarised in the following proposition. Recall for this that $P_\eta$ denotes the ``quenched'' probability measure, i.e.\ the measure, conditioned on a realisation of~$\eta$. 
\begin{proposition}
    \label{prop:speed_bound}
    For $\varepsilon,\delta>0$ there exists $R_{\mu,\delta,\varepsilon}>\widetilde R_{\mu,\varepsilon}$ such that for all $R\ge R_{\mu,\delta,\varepsilon}$ and $L_s, L_t, G_U, G_\conf$ as in Proposition~\ref{prop:main_proposition}, the random walk $X$ satisfies for $(x,n) \in \Z^d \times \Z$ 
    \begin{equation}
   \label{eqn:RW_stays_in_block}
   \min_{z: \norm{L_sx-z}\le L_s/2}P_\eta\Big(  \max_{(n-1)L_t<k\le nL_t}\norm{X_k-z}\le L_s/4 \Big| X_{(n-1)L_t}=z, \Gamma(x,n)=1 \Big)\ge 1-\delta.
\end{equation}
\end{proposition}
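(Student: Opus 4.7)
The starting observation is a deterministic single-step bound: by the transition rule \eqref{eqn:randomwalk}, $X_{k+1}$ is supported on $B_R(X_k)$, so $\norm{X_{k+1}-X_k}\le R$ holds $P_\eta$-almost surely for every realisation of~$\eta$. Iterating from time $(n-1)L_t$ and using the triangle inequality, on the event $\{X_{(n-1)L_t}=z\}$ we obtain
\begin{equation*}
\max_{(n-1)L_t<k\le nL_t}\norm{X_k-z}\;\le\;L_tR\qquad P_\eta\text{-a.s.},
\end{equation*}
so that \eqref{eqn:RW_stays_in_block} holds with probability one as soon as the scales satisfy $L_s\ge 4L_tR$.

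\textbf{Arranging the scales.} The remaining work is therefore to ensure that the scales $L_s,L_t$ produced by Proposition~\ref{prop:main_proposition} can be taken to satisfy $L_s\ge 4L_tR$ for $R$ sufficiently large. In the renormalisation of Section~\ref{sec:proof_main}, the temporal scale $L_t$ arises as a coupling/relaxation time depending essentially only on $\mu$, while the spatial scale $L_s$ is chosen as a multiple of $R$. The contraction and propagation-of-goodness statements \ref{prop:item1}--\ref{prop:item2} of Proposition~\ref{prop:main_proposition} only require $L_s$ to exceed a certain multiple of $L_tR$; any larger $L_s$ works as well, at the mild cost of redefining $G_\conf$ and $G_U$ on the enlarged window (for instance as products of their copies on subwindows), and the driving-noise bound \eqref{eqn:main_proposition} is preserved. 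Choosing $L_s\ge 4L_tR$ upfront is therefore permitted.

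\textbf{Conclusion and main obstacle.} Taking $R_{\mu,\delta,\varepsilon}\ge\widetilde R_{\mu,\varepsilon}$ large enough that the (possibly enlarged) scales from Proposition~\ref{prop:main_proposition} satisfy $L_s\ge 4L_tR$ yields \eqref{eqn:RW_stays_in_block} with probability one, hence a fortiori with probability $\ge 1-\delta$ for any $\delta>0$. The main obstacle is not a probabilistic estimate but a scale-bookkeeping check: one must verify that enlarging $L_s$ does not spoil the contraction/coupling hypotheses of Proposition~\ref{prop:main_proposition}. Because the flow \eqref{eqn:flow_construction} is spatially local — the value of $\eta_{n+1}(x)$ depends only on $U(x,n+1)$ and $\eta_n|_{B_R(x)}$ — enlarging $L_s$ by a constant factor is essentially harmless, but tracing this through the construction of~\cite{BARW} requires care. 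A fallback, should the scales not admit free enlargement, is to decompose $X_k-z$ into its $P_\eta$-martingale part plus a quenched drift: Azuma--Hoeffding controls the martingale at scale $R\sqrt{L_t\log(1/\delta)}$, while goodness of the environment (i.e.\ the local density of $\eta$ being close to the equilibrium value $\rho(\mu)$) forces the step-distribution to be nearly uniform on $B_R$ and thus the drift to be small.
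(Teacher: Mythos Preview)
Your primary argument has a genuine gap: the claim that ``the temporal scale $L_t$ arises as a coupling/relaxation time depending essentially only on $\mu$'' is false for the construction in Section~\ref{sec:proof_main}. By \eqref{eqn:new_scale}--\eqref{eqn:T_spread_and_T_couple}, $L_t=T^\spread+T^\couple$ with $T^\spread=\big\lceil 3L_s/\lceil sR\rceil\big\rceil$, so $T^\spread$ is \emph{proportional to $L_s/R$}. This is unavoidable: the comparison density profiles $\zeta_k^{R,\pm}$ advance by only $\lceil sR\rceil$ per time step, and must spread from $B_{2L_s}$ across the full spatial extent $B_{4L_s}$ of the block for property~\ref{prop:item2} (propagation of goodness to neighbours) to hold. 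Consequently $L_tR\ge T^\spread R\gtrsim 3L_s/s$, which is \emph{larger} than $L_s$, and no amount of enlarging $L_s$ will yield $L_s\ge 4L_tR$; the two scales move together. The ``product of subwindows'' patch does not help either, since $G_\conf$ encodes density control that must interlock across subwindows, not factorise.

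Your fallback is precisely the paper's proof. Section~\ref{sec:proof_speed_prop} writes $X_k-z$ as a martingale part $\sum Y_i$ plus a quenched drift $\sum E_\eta^z[X_i-X_{i-1}\mid X_{i-1}]$; Azuma--Hoeffding controls the martingale at scale $R\sqrt{L_t}$, which is $o(L_s)$ because $L_s\sim R\log R$ while $R\sqrt{L_t}\sim R\sqrt{\log R}$. The drift bound (Lemma~\ref{lem:non_martingale_part}) is where the work lies and is exactly where the $r_0$-density control enters: on a good block one has $\delta_{r_0}(\cdot;\eta_k)\in[\theta_\mu-\varepsilon_\FP,\theta_\mu+\varepsilon_\FP]$ on sub-balls of $B_R$, forcing the step law to be nearly uniform on $B_R$ and the conditional mean increment to be $O(r_0)=O(R/M)$, which for $M$ large is below $L_s/(8L_t)$. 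So the proposal is not wrong in spirit, but the deterministic route you lead with cannot close, and the actual proof is the one you relegate to a single sentence at the end.
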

The proof of Proposition~\ref{prop:speed_bound} follows from the fact that in a block with good driving noise, the relative fluctuations of the local density of $\eta$ over $R$ balls and $r$-balls with $1\ll r<R$ are small. Thus, in each step, the increments of $X$ do not deviate much from the increments of a simple random walk. As the precise notion of goodness of a block depends on the construction in the proof of Proposition~\ref{prop:main_proposition},  and the specifics of the fluctuations of the density of $\eta$ are described in Section~\ref{sec:concentration} below, we postpone the details of the proof to Section~\ref{sec:proof_speed_prop}.

The second condition on $X$ is that it behaves symmetrically with respect to spatial point reflections, when $\eta$ is reflected accordingly, cf.~\cite[Assumption~3.11]{RWancestry2016BirknerCernyDepperschmidt}. As for any time $k\in \N$, the random walk in \eqref{def:RW} chooses uniformly among the possible ancestors of the particle at $X_k$, this condition holds trivially. 
Note that this symmetry corroborates that asymptotically the average speed of $X$ is zero.

With the results of Sections~\ref{sec:assumptions_envrionment} and \ref{sec:assumptions_RW} at hand, the proof of Theorem~\ref{thm:main} follows directly.
\begin{proof}[Proof of Theorem~\ref{thm:main}]
The assertion of the theorem follows by a combination of Proposition~\ref{prop:main_proposition} and \ref{prop:speed_bound} as well as Theorem~3.1 of \cite{RWancestry2016BirknerCernyDepperschmidt} for all $\eta$ defined with parameters $\mu \in (1,e^2)$ and for $R$ large enough.
\end{proof}

\section{Renormalisation construction}
\label{sec:renormalisation}

In this section, we first collect some results on the function $\varphi_\mu$ that will be essential in the proof of Proposition~\ref{prop:main_proposition} in Section~\ref{sec:proof_main}. These give insight into the behaviour of local densities $\delta_r(\cdot; \eta)$ for $1\ll r \le R$ which in turn lets us outline the idea of the block construction from \cite[Section~4]{BARW}, on which our construction in Section~\ref{sec:proof_main} is based. Moreover, we discuss how to get quantitative control of the local densities.

\subsection{Properties of \texorpdfstring{$\varphi_\mu$}{pm}}\label{ss:properties_of_varphi}
The following lemma summarises useful properties of the function  $\varphi_\mu(w) = \mu w e^{-\mu w}$, which appears in the definition of the dynamics of $\eta$, cf.\ \eqref{eqn:next_gen_barw} and \eqref{eqn:flow_construction}. 

\begin{lemma}\label{lem:varphi_properties}
\begin{enumerate}[label =(\alph*)]
  \item For $\mu >1$, $\varphi_\mu $ has two fixpoints,
  $0$ and $\theta_\mu := \mu^{-1} \log \mu$. The fixpoint $0$ is always repulsive.
  \item For $\mu \in (1, e^2)$, $\theta_\mu $ is an attractive fixpoint and for $\mu > e^2$, there are no attractive fixpoints.
  \item For every $\mu\in (1,e^2)$ there is $\varepsilon_\FP = \varepsilon_\FP(\mu)>0$ and
  $\kappa(\mu,\varepsilon_\FP)<1$ such that $\varphi_\mu $ is a contraction on
  $[\theta_{\mu}-\varepsilon_\FP, \theta_{\mu}+\varepsilon_\FP]$, that is,
  \begin{equation*}
    |\varphi_{\mu}(w_1)-\varphi_{\mu}(w_2)|
    \le \kappa(\mu,\varepsilon_\FP)|w_1-w_2|
    \qquad
    \text{for }w_1, w_2 \in
    [\theta_{\mu}-\varepsilon_\FP, \theta_{\mu}+\varepsilon_\FP].
  \end{equation*}
  \item There exists a strictly increasing sequence
  $\alpha_m \uparrow \theta_\mu$ and a strictly decreasing sequence
  $\beta_m \downarrow \theta_\mu$ such that
  $\varphi_{\mu} ([\alpha_m, \beta_m]) \subseteq(\alpha_{m+1}, \beta_{m+1})$
  for all $m \in \N$. Furthermore, it is possible to choose $\alpha_1 >0$
  arbitrarily small and $\beta_1 > 1/e$.
\end{enumerate}
\end{lemma}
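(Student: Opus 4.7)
Parts (a)--(c) reduce to standard calculus on the one-dimensional map $\varphi_\mu$. The equation $\varphi_\mu(w)=w$ factors as $w(\mu e^{-\mu w}-1)=0$, yielding the fixed points $0$ and $\theta_\mu=\mu^{-1}\log\mu$; since $\varphi_\mu'(w)=\mu(1-\mu w)e^{-\mu w}$, one has $\varphi_\mu'(0)=\mu>1$, so $0$ is repulsive. For (b), using $\mu e^{-\mu\theta_\mu}=1$ gives $\varphi_\mu'(\theta_\mu)=1-\log\mu$, and the attractivity condition $|1-\log\mu|<1$ is equivalent to $\mu\in(1,e^2)$. Part (c) then follows by continuity of $\varphi_\mu'$: pick $\kappa\in(|1-\log\mu|,1)$ and choose $\varepsilon_\FP>0$ small enough that $|\varphi_\mu'|\le\kappa$ on $[\theta_\mu-\varepsilon_\FP,\theta_\mu+\varepsilon_\FP]$; the mean value theorem finishes the argument.

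For part (d), the plan is an iterative construction based on two elementary features of $\varphi_\mu$. First, $\varphi_\mu$ is unimodal with global maximum $\varphi_\mu(1/\mu)=1/e$, so $\varphi_\mu([0,\infty))\subseteq[0,1/e]$. Second, $\mu e^{-\mu w}>1$ iff $w<\theta_\mu$, so $\varphi_\mu(w)>w$ on $(0,\theta_\mu)$ and $\varphi_\mu(w)<w$ on $(\theta_\mu,\infty)$. To initialise, any $\beta_1>1/e$ forces $\max_{[\alpha_1,\beta_1]}\varphi_\mu\le 1/e<\beta_1$, and since $\varphi_\mu$ is unimodal the minimum on $[\alpha_1,\beta_1]$ equals $\min(\varphi_\mu(\alpha_1),\varphi_\mu(\beta_1))$; choosing $\alpha_1>0$ with $\alpha_1<\min(\theta_\mu,\varphi_\mu(\beta_1))$ then yields the strict inclusion $\varphi_\mu([\alpha_1,\beta_1])\subsetneq(\alpha_1,\beta_1)$, and $\alpha_1$ can indeed be chosen arbitrarily small.

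For the inductive step, given strict containment $\varphi_\mu([\alpha_m,\beta_m])=[c_m,d_m]\subsetneq(\alpha_m,\beta_m)$ with $\alpha_m<c_m\le\theta_\mu\le d_m<\beta_m$, I would choose $\alpha_{m+1}\in(\alpha_m,c_m)$ and $\beta_{m+1}\in(d_m,\beta_m)$, taking them close enough to $c_m,d_m$ (for instance by leaving only a geometrically vanishing gap) that the contraction from (c) takes over once $[\alpha_m,\beta_m]\subseteq[\theta_\mu-\varepsilon_\FP,\theta_\mu+\varepsilon_\FP]$. By construction this enforces $\alpha_m<\alpha_{m+1}<\theta_\mu<\beta_{m+1}<\beta_m$ at every step, and once inside the contraction window one gets $\max(\theta_\mu-\alpha_m,\beta_m-\theta_\mu)\to 0$ geometrically, so $\alpha_m\uparrow\theta_\mu$ and $\beta_m\downarrow\theta_\mu$.

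The main obstacle I expect is showing the iterate reaches the contraction window in finitely many steps, which amounts to showing that the basin of attraction of $\theta_\mu$ for $\varphi_\mu$ contains all of $(0,\infty)$. I would handle this by a case analysis on the sign of $\varphi_\mu'(\theta_\mu)=1-\log\mu$ (positive for $\mu<e$, negative for $\mu>e$), reducing the problem to the compact invariant interval $[0,1/e]$ (since everything enters it after one step) and ruling out attracting orbits other than $\theta_\mu$ inside $(1,e^2)$; this is in essence the classical global stability of the Ricker-type map in its stable regime and can alternatively just be quoted.
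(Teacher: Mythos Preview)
Your arguments for (a)--(c) are correct and amount to exactly the routine calculus the paper has in mind when it says these ``are a direct consequence of the definition of $\varphi_\mu$''. For (c) and (d) the paper does not actually give a proof but defers to \cite[Lemma~4.1]{BARW}, so there is no detailed argument in the present paper to compare against; your proposal therefore goes further than what the paper itself supplies.

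Your outline for (d) is sound: the unimodality of $\varphi_\mu$ with global maximum $1/e$ and the sign pattern of $\varphi_\mu(w)-w$ indeed give the strict invariance $\varphi_\mu([\alpha_1,\beta_1])\subsetneq(\alpha_1,\beta_1)$ for the stated choice of $\alpha_1,\beta_1$, and the inductive step produces strictly monotone sequences sandwiching $\theta_\mu$. You correctly single out the only nontrivial point, namely that the nested images $\varphi_\mu^m([\alpha_1,\beta_1])$ shrink to $\{\theta_\mu\}$ rather than to a larger invariant interval; this is precisely the global stability of the Ricker map $w\mapsto \mu w e^{-\mu w}$ on $(0,\infty)$ for $\mu\in(1,e^2)$, which is classical and can be quoted (or proved via the case split you sketch, or via a Schwarzian-derivative argument). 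With that ingredient in place your construction is complete.
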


\begin{proof}
    Properties (a)--(b) are a direct consequence of the definition of $\varphi_\mu$ and (c)--(d) are the contents of \cite[Lemma~4.1]{BARW}.
\end{proof}
Note that by Lemma~\ref{lem:varphi_properties}(d) for any $\mu \in (1,e^2)$ and any choice of $\alpha_1>0$ and $\beta_1>1/e$, there is a finite index $m_0$ such that $\alpha_m,\beta_m \in [\theta_\mu-\varepsilon_\FP,\theta_\mu+\varepsilon_\FP]$ for all $m\ge m_0$, i.e.
\begin{equation}
    \label{def:m_0}
        m_0 = m_0(\mu,\alpha_1,\beta_1) := \inf\big\{ m\ge 1 : \alpha_m,\beta_m \in [\theta_\mu-\varepsilon_\FP,\theta_\mu+\varepsilon_\FP]\big\}.
\end{equation}
This value will later play a role in establishing the properties Proposition~\ref{prop:main_proposition}\ref{prop:item1}--\ref{prop:item2}.

\subsection{Local densities and goodness}\label{sec:blocks:cc}
In  \cite[Section~4]{BARW} complete convergence of the BARW is shown by a comparison with supercritical oriented percolation with a coarse-graining and a notion of goodness on blocks that is reminiscent, but not identical to that of Proposition~\ref{prop:main_proposition} and \eqref{eqn:oriented_percolation}, cf.\ Remark~\ref{rem:new_timescale}. 
The exact notion of goodness in \cite{BARW} is tailored specifically to showing complete convergence, i.e.\ convergence (conditioned on survival) of the law of the BARW towards the \emph{unique} non-trivial extremal invariant distribution $\nu_{\mu,R}$. To achieve this, \emph{good} blocks were defined to make two distinct configurations which partially agree at the ``bottom'' of the block, evolve into configurations that agree on a larger portion of the ``top'' of the block (this should be compared to Proposition~\ref{prop:main_proposition}\ref{prop:item1}).

To motivate why it is reasonable to expect distinct local configurations of the BARW, following the same dynamics, to merge, recall that the random variables $(\eta_{n+1}(x))_{x\in \Z^d}$ are conditionally independent and conditionally Bernoulli distributed, given $\eta_n$, with respective parameters $\varphi_{\mu}(\delta_R(x;\eta_n))$, cf. \eqref{eqn:next_gen_barw}. 
Therefore, local densities $\delta_r(x;\eta_{n+1})$ (as sums of these conditional Bernoulli random variables) should concentrate around some value, at least for $r$ large. In \cite{BARW} only the case $r=R$ was considered and it was used there that the value around which the local densities concentrate converges to $\theta_\mu$, and thus, by repeated application of Lemma~\ref{lem:varphi_properties}(c)--(d) and \eqref{eqn:flow_construction}, the flow $\Phi$ tends to merge any two distinct realisations of the BARW over long enough time-spans. Controlling how distinct configurations merge under the dynamics of the BARW thus amounts to gaining control of local densities.

We follow the same idea as in \cite{BARW} to gain quantitative control of the (local) densities by introducing certain families of \emph{reference density profiles} $\zeta^{r,-}_k: \Z^d\to [0,1]$ and $\zeta^{r,+}_k: \Z^d\to [0,1]$, for $k\in \{0,\dots, k_0\}$. We are then interested in the situation where the local $r$-densities of the true system $\eta_k$ are wedged in between the two reference density profiles, i.e.\ when 
\begin{equation}\label{eqn:density_wedge}
    \zeta^{r,-}_k(x) \le \delta_r(x; \eta_k) \le \zeta^{r,+}_k(x),\quad \text{for suitable }\, x \in \Z^d, k \in \N_0 \,\text{ and }\,r\, \text{ large enough}.
\end{equation}

In Section~4.1 of \cite{BARW} a family of reference density profiles is introduced, which satisfies \eqref{eqn:density_wedge} with high probability for $r=R$. Moreover, the profiles are chosen in such a way that ${|\zeta^{R,+}_k- \zeta^{R,-}_k | \in [ \theta_\mu-\varepsilon_\FP, \theta_\mu + \varepsilon_\FP]}$ on part of the support of $\zeta_k^{R,-}$, and such that they have a fixed deterministic ``front'' that expands by a fixed distance in every time step, see Figure~\ref{fig:zeta_sketch} for a sketch of the one-dimensional profiles $\zeta_k^{R,\pm}$ of \cite{BARW}. The control \eqref{eqn:density_wedge} by the expanding families $\zeta^{R,\pm}$ thus lets one apply Lemma~\ref{lem:varphi_properties}(c)--(d) in a growing spatial region throughout a block. 
\begin{figure}[t]
  \label{fig:density_profile}
  \centering
  \includegraphics[width=\textwidth]{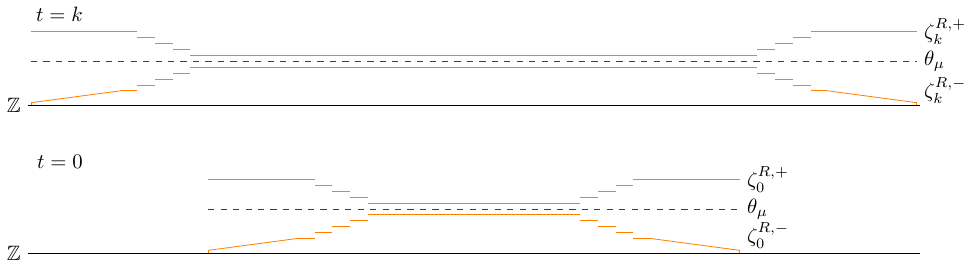}
  \caption{Sketch of the density profiles $\zeta_{k}^{R,\pm}$ (in orange) from \cite{BARW} for dimension $d=1$. The bottom shows the profiles $\zeta_0^{R,\pm}$, which then expand to $\zeta_k^{R,\pm}$ in the top image. The profiles are chosen such that the distance of the profiles to $\theta_\mu$ is smaller than $\varepsilon_\FP$ in the central constant part of the profiles.} 
  \label{fig:zeta_sketch}
\end{figure}

Based on this and a suitable coarse-graining of space-time, the notion of goodness in \cite{BARW} is defined using a two-step procedure for any two realisations $(\eta^{(1)}_n)_{n\ge 0}$ and $(\eta^{(2)}_n)_{n\ge0}$ of the BARW, with different initial conditions, coupled through the flow construction \eqref{eqn:flow_construction}. 
\begin{enumerate}[label =(\Roman*)]
    \item \label{item_A:old_well_startedness} A block based at some $( z,  t) \in \Z^d \times \Z$ is called \emph{well-started} if\begin{align}
    \label{eqn:well-started2}
    \delta_R(x; \eta_t^{(i)}) \in \big[ \zeta^{R,-}_0(x- z), \zeta^{R,+}_0(x- z) \big]
    \quad \text{for all } x \in  \{ z+y :\zeta^{R,-}_0(y)>0\}, \; i=1,2
  \end{align}
  and 
  \begin{align}
    \label{eqn:well-started2a}
    \eta_t^{(1)}(x) = \eta_t^{(2)}(x) \quad \text{for all $x$ in a suitably large ball around $z$.} 
  \end{align}
  \item \label{item_B:old_good} A block based at some $( z,  t) \in \Z^d \times \Z$ is called \emph{good} if it is well-started and the domination by the $\zeta^{R,\pm}$ profiles and the region where \eqref{eqn:well-started2a} holds spread throughout suitably large portions of the block (we refer to \cite{BARW} for details on what suitable means in this context).
\end{enumerate}

The first step is concerned with guaranteeing that the configurations of $(\eta^{(i)}_n)_{n\ge0}$ at the ``bottom'' of a block are controlled by a suitable shifted version of the reference profiles $\zeta_0^{R,\pm}$, whereas the second is concerned with the ``spreading'' of the region where the two configurations agree, as well as the region where the density control by the reference profiles holds. 

\begin{remark}
\label{rem:new_timescale}
\begin{enumerate}[label=(\alph*)]
    \item Morally, \eqref{eqn:oriented_percolation} is a ``uniformisation'' of the properties given by \ref{item_A:old_well_startedness}--\ref{item_B:old_good} above by making the notion of goodness independent of any specific configuration at the ``bottom'' of the block. To accomplish this, special care needs to be taken of \eqref{eqn:well-started2a}, which is a condition of two configurations partially agreeing in a region at the bottom of the block. To overcome this, we introduce below, in \eqref{eqn:reference_configurations} the set of reference configurations $\mathrm{C}_\reff$, given by all configurations $\widetilde \eta \in \{0,1\}^{\Z^d}$ such that the local densities are globally in the interval $[ \theta_\mu-\varepsilon_\FP,\theta_\mu + \varepsilon_\FP]$ to which configurations which are locally in $G_\conf$ will need to couple successfully.

    \item The size of blocks and thus the scales of the coarse-graining in the construction outlined by \ref{item_A:old_well_startedness}--\ref{item_B:old_good} are linked to concrete details of the reference density profiles $\zeta_k^{R,\pm}$ that are used, as the scales need to be chosen such that the desired properties spread to suitable large portions of the blocks.
\end{enumerate}
\end{remark}

In contrast to \cite[Section~4.1]{BARW}, for the current purposes, it does not suffice to only have control of the local $R$-densities of the process $\eta$, but in order to prove Proposition~\ref{prop:speed_bound}, we also need control of local $r_0$-densities for some $r_0<R$, which is specified in Section~\ref{sec:proof_speed_prop} as a fixed proportion of $R$. Without loss of generality, we assume that $r_0$ divides $R$. Importantly, taking $R$ large has the effect that $r_0$ is also large. To this end, we work with two sequences of reference density profiles in Sections~\ref{sec:proof_main}--\ref{sec:proof_speed_prop} below, one controlling the local $R$-densities and one controlling local $r_0$-densities, cf.\ \eqref{eqn:density_wedge}. 

\subsection{Concentration and comparison of local densities with deterministic profiles}\label{sec:concentration}

Let us now elaborate upon and formalise the idea of wedging local $r$-densities of $\eta_k$ in between two reference functions, as in \eqref{eqn:density_wedge}.
To this end, we introduce sequences of functions $\zeta_k^{r,\pm}$ on $\Z^d$, which serve as these reference functions and which we call \emph{comparison density profiles} (c.d.p.'s). These are analogous to the functions introduced in \cite[Definition~2.2]{BARW}, with the difference that we require an averaging property over $r$-balls instead of $R$-balls.

\begin{definition}
  \label{def:density_profiles}
  For given $\varepsilon, \delta  >0$ and $r \in \N $,  $(\varepsilon,\delta,r)$-\emph{comparison density profiles} ($(\varepsilon,\delta,r)$-\emph{c.d.p.}'s) are deterministic functions
  $\zeta_k^{r,-}, \zeta_k^{r,+} : \Z^d \to [0,\infty)$, $k=0,1,\dots,k_0$,
  satisfying:
  \begin{enumerate}[label = (\roman*)]
    \item \label{profile:i}
    For every $k=0,\dots,k_0$,
    $\zeta_k^{r,-}(\cdot) \le \zeta_k^{r,+}(\cdot)$.

    \item \label{profile:ii}
    For every $k=0,\dots,k_0$,
    $\Supp(\zeta_k^{r,-}):=\{x\in \Z^d: \zeta_k^{r,-}(x) > 0\}$ is finite, and
    $\zeta_k^{r,-}(x)\ge \varepsilon $ for
    every $x\in \Supp(\zeta_k^{r,-})$.

    \item \label{profile:iii}
    For every $k = 0,\dots,k_0-1$, and $x\in \Supp(\zeta_k^{r,-})$ it holds
    that if $\rho : B_{r}(x) \to \mathbb R$ satisfies $\rho (y) \in
    [\zeta_k^{r,-}(y), \zeta_k^{r,+}(y)]$ for all $y\in B_{r}(x)$, then
      \begin{equation}
      \label{eqn:xi.comp.bd1}
      (1+\delta) \zeta_{k+1}^{r,-}(x)
      \le V_{r}^{-d} \sum_{y \in B_{r}(x)} \varphi_\mu ( \rho(y))
      \le (1-\delta) \zeta_{k+1}^{r,+}(x).
    \end{equation}
  \end{enumerate}

\end{definition}

In Lemma~\ref{lem:zeta_is_cdp}, we show that such c.d.p.'s in fact exist, by constructing specific examples (in \eqref{eqn:new_zeta_minus}--\eqref{eqn:new_zeta_plus}). A comment is in order to clarify the role of the model-parameter $R$ in these c.d.p.'s and in particular in \eqref{eqn:xi.comp.bd1}.
Recall that by \eqref{eqn:next_gen_barw} the distribution of $\eta_{k+1}(x)$ is conditionally Bernoulli, given $\eta_k$, with parameter $\varphi_\mu(\delta_R(x;\eta_k))$. Therefore, in order to make any statement about $r$-densities at time $k+1$, we must have some knowledge of $R$-densities at time $k$. In terms of applying $(\varepsilon,\delta,r)$-c.d.p.'s to control local $r$-densities of $\eta$, we interpret $\rho$ in \ref{profile:iii} as representing values of local $R$-densities. This means that in order to make use of the control guaranteed by \eqref{eqn:xi.comp.bd1}, we need $R$-densities(!) at time $k$ to be controlled by the $(\varepsilon,\delta,r)$-c.d.p.'s. 

In the following, we introduce for any $\zeta^{r,\pm}$ satisfying Definition~\ref{def:density_profiles} with $r\in\{1,\dots,R\}$ and any $x\in \Z^d$ the sets of driving noise that provide uniform (in the configurations of $\eta$) $r$-density control at $x$ by the $(\varepsilon,\delta,r)$-c.d.p.'s centred at $x$ at time $k+1$ given that $R$-densities in $B_r(x)$ are controlled by $(\varepsilon,\delta,r)$-c.d.p.'s at time $k$.
\begin{equation*}
   \mathcal{U}_k^{r}(\zeta^{r,\pm};x):= \left\{ u \in [0,1]^{B_{r}(0)} :  \;
      \parbox{0.45\textwidth}{ for all configurations $\tilde\eta \in \{0,1\}^{\Z^d}$  s.t. $\delta_{R}(y; \tilde\eta)\in [\zeta_k^{r,-}(y), \zeta_k^{r,+}(y)]$ for all sites $y \in B_{r}(x)$, it holds that $V_{r}^{-d}\sum_{y \in B_{r}(x)}\ind_{\{u_{y-x} \le \varphi_\mu(\delta_{R}(y;\tilde\eta))\}}\in [\zeta_{k+1}^{r,-}(x),\zeta_{k+1}^{r,+}(x)]$} \: 
      \right\}.
\end{equation*}
Moreover, let $\mathcal{U}_k^{r,\pm}(\zeta^{r,\pm};x)$ be the corresponding sets, where only the upper/lower bound holds for the last sum, i.e.\  $\mathcal{U}_k^{r}(\zeta^{r,\pm};x) = \mathcal{U}_k^{r,-}(\zeta^{r,\pm};x)\cap \mathcal{U}_k^{r,+}(\zeta^{r,\pm};x)$. To ease the presentation, we drop the dependence on $\zeta^\pm$ in the notation, whenever the choice of the c.d.p.'s is clear and write simply $\mathcal{U}_k^{r}(x)$ and $\mathcal{U}_k^{r,\pm}(x)$, respectively.

\begin{lemma}\label{lem:U_k}
For $\varepsilon,\delta>0$, $r \in \N$, let $\zeta_k^{r,\pm}$ be a family of  $(\varepsilon,\delta,r)$-c.d.p.'s. It then holds for $k = 0,1,\dots,k_0-1$ that
    \begin{equation}
        \label{eqn:concentration_upgrade}
        \P\left(U|_{B_{r}(0)\times \{k\}} \in \mathcal{U}_k^r(0)\right) \ge 1-2e^{-cV_{r}^d},
    \end{equation}
    where $c = ({\delta \varepsilon})/({1/(2\delta \varepsilon) + 2/3})$.
\end{lemma}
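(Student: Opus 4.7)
The plan is to reduce the uniformity-in-$\tilde\eta$ condition built into $\mathcal{U}_k^r(0)$ to a concentration statement about two fixed sums of independent Bernoulli variables, and then close with Bernstein's inequality.

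First, for each $y\in B_r(0)$ I would introduce the extremal probabilities
\[
p_y^- := \min_{w\in [\zeta_k^{r,-}(y),\zeta_k^{r,+}(y)]}\varphi_\mu(w), \qquad p_y^+ := \max_{w\in [\zeta_k^{r,-}(y),\zeta_k^{r,+}(y)]}\varphi_\mu(w),
\]
attained at some admissible $\rho^-(y), \rho^+(y)$ by continuity of $\varphi_\mu$ on a compact interval. For any $\tilde\eta$ feasible for $\mathcal{U}_k^r(0)$ one has $\varphi_\mu(\delta_R(y;\tilde\eta))\in[p_y^-,p_y^+]$, giving the pointwise sandwich $\ind_{\{u_y\le p_y^-\}}\le \ind_{\{u_y\le \varphi_\mu(\delta_R(y;\tilde\eta))\}}\le \ind_{\{u_y\le p_y^+\}}$. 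Summing over $y\in B_r(0)$, a sufficient and $\tilde\eta$-free condition for $u\in \mathcal{U}_k^r(0)$ is that the two random averages
\[
S^\pm := V_r^{-d}\sum_{y\in B_r(0)}\ind_{\{U(y,k)\le p_y^\pm\}}
\]
satisfy $S^-\ge \zeta_{k+1}^{r,-}(0)$ and $S^+\le \zeta_{k+1}^{r,+}(0)$.

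Next I would apply Definition~\ref{def:density_profiles}\ref{profile:iii} to the admissible test functions $\rho^\pm$, obtaining $\E[S^-]\ge (1+\delta)\zeta_{k+1}^{r,-}(0)$ and $\E[S^+]\le (1-\delta)\zeta_{k+1}^{r,+}(0)$; combined with property~\ref{profile:ii} this separates the mean of $S^\pm$ from the threshold $\zeta_{k+1}^{r,\pm}(0)$ by at least $\delta\varepsilon$. A Bernstein bound applied to each of the $V_r^d$-sums $V_r^d S^\pm$, using the crude Bernoulli-variance estimate $\sigma^2\le V_r^d/4$ together with the monotonicity of $s\mapsto s^2/(2\sigma^2+2s/3)$ for $s\ge 0$, then lets me substitute the worst-case deviation $s_0:=\delta\varepsilon V_r^d$ and yields
\[
\P(S^-<\zeta_{k+1}^{r,-}(0))\le \exp\!\Big(-\frac{(\delta\varepsilon)^2 V_r^d}{1/2 + (2/3)\delta\varepsilon}\Big)=\exp(-cV_r^d),
\]
with $c=\delta\varepsilon/(1/(2\delta\varepsilon)+2/3)$ exactly as stated, and the analogous bound for $S^+$; a union bound produces the factor $2$.

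The only substantive step is the reduction in the first paragraph. Its key feature is that Definition~\ref{def:density_profiles}\ref{profile:iii} is formulated for arbitrary admissible $\rho$, not only for those realised as $R$-densities $\delta_R(\,\cdot\,;\tilde\eta)$; this lets me feed in the extremisers $\rho^\pm$ directly and thereby sidestep the uniformity over $\tilde\eta$. The remainder is a routine Bernstein computation and I do not foresee serious obstacles.
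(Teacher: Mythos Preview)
Your proposal is correct and essentially identical to the paper's proof: both reduce the uniformity in $\tilde\eta$ by passing to the pointwise extremisers $\rho^\pm$ (the paper's $\underline\lambda_k,\overline\lambda_k$), invoke property~\ref{profile:iii} of the c.d.p.'s for these extremal admissible profiles, and conclude with Bernstein's inequality using the crude variance bound $V_r^d/4$ and the lower bound $\zeta_{k+1}^{r,-}(0)\ge\varepsilon$ from property~\ref{profile:ii}. Your remark that \ref{profile:iii} is formulated for arbitrary admissible $\rho$, not only realised $R$-densities, is exactly the point that makes the reduction work and is used (implicitly) in the paper as well.
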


\begin{proof}
    The proof is an adaptation of the proof of \cite[Lemma~2.3]{BARW}. Note that \eqref{eqn:concentration_upgrade} follows if one shows that  $\P\left(U|_{B_{r}(0)\times \{k\}} \in \mathcal{U}_k^{r,-}(0)\right)$ and $\P\left(U|_{B_{r}(0)\times \{k\}} \in \mathcal{U}_k^{r,+}(0)\right)$ are both greater than  $1-e^{-c V_{r}^d}$. We start by showing the first inequality.

    For $k\in\{0,\dots,k_0-1\}$ and $ y \in B_{r}(x)$ for $x \in \Z^d$, introduce the quantities
    \begin{equation*}
    \underline\lambda_k(y) := \arginf_{\lambda  \in [\zeta_k^{r,-}(y),\zeta_k^{r,+}(y)]}\varphi_\mu(\lambda). 
    \end{equation*}
    Note that by \eqref{eqn:xi.comp.bd1} it follows from $\underline\lambda_k(y) \in [\zeta_k^{r,-}(y),\zeta_k^{r,+}(y)]$ for all $y \in B_{r}(x)$, that 
    \begin{equation}
    \label{eqn:concentration_1}
        V_{r}^{-d}\sum_{y \in B_{r}(x)} \varphi_\mu(\underline\lambda_k(y))\ge (1+\delta)\zeta_{k+1}^{r,-}(x).
    \end{equation}
    Moreover, for $k \in \{0,\dots,k_0-1\}$ and $y \in B_{r}(x)$, let $Z_y^k:=\ind_{\{U(y,k) \le \varphi_\mu(\underline\lambda_k(y))\}}$ where $U$ is the i.i.d.\ field of driving noise. These are independent Bernoulli random variables with parameters $\varphi_\mu(\underline\lambda_k(y))$. 
    
    Using the $Z_y^k$ and \eqref{eqn:concentration_1}, we see that
    \begin{align*}
        \P\left(U|_{B_{r}(0)\times \{k\}} \notin \mathcal{U}_k^{r,-}(0)\right) &\le \P \Big( V_{r}^{-d}\sum_{y \in B_{r}(x)}\ind_{\{U(y,k) \le \varphi_\mu(\underline\lambda_k(y))\}}<\zeta_{k+1}^{r,-}(x) \Big)\\
        &\le \P\Big( \sum_{y \in B_{r}(x)}\big(Z_y^k-\varphi_\mu(\underline\lambda_k(y))\big)<-\delta V_{r}^d \zeta_{k+1}^{r,-}(x)\Big).
    \end{align*}
    Since the right hand side of the above is a sum of independent centred random variables, and since
    \begin{equation*}
    \Var\Big( V_{r}^{-d}\sum_{y \in B_{r}(x)}Z_y^k\Big)
 = V_{r}^{-2d}\sum_{y \in B_{r}(x)} \varphi_\mu(\underline\lambda_k(y))\big( 1-\varphi_\mu(\underline\lambda_k(y))\big) \le \frac{1}{4}V_{r}^{-d},
 \end{equation*}
a concentration estimate for the sum of independent Bernoulli random variables, see e.g.\ \cite[Lemma~A.1]{BARW} for an estimate based on Bernstein's concentration inequality, can be used, exactly as in the proof of \cite[Lemma~2.3]{BARW} in order to conclude that
 \begin{equation*}
 \P\left(U|_{B_{r}(0)\times \{k\}} \notin \mathcal{U}_k^{r,-}(0)\right) \le \exp(-cV_{r}^d),
 \end{equation*}
 where $c = ({\delta \varepsilon})/({1/(2\delta \varepsilon) + 2/3})$ is due to details of the concentration estimate.
 
 Proving that the probability of $\{U|_{B_{r}(0)\times\{k\}} \notin \mathcal{U}_k^{r,+}(0) \}$ is small is completely analogous, using $\overline\lambda_k(y) := \argsup_{\lambda \in [\zeta_k^{r,-}(y),\zeta_k^{r,+}(y)]} \varphi_\mu(\lambda)$ instead of $\underline\lambda_k(y)$ and
 \begin{equation*}
    V_{r}^{-d}\sum_{y \in B_{r}(x)} \varphi_\mu(\overline\lambda_k(y)) \le (1-\delta)\zeta_{k+1}^{r,+}(x)
 \end{equation*}
 instead of \eqref{eqn:concentration_1}.
\end{proof}

\begin{remark}
Lemma~\ref{lem:U_k} is a (slightly) strengthened version of Lemma~2.3, from \cite{BARW}, where it was shown that for any given c.d.p.'s $\zeta^{R,\pm}_k$ and any (possibly random) configuration $\widetilde \eta \in \{0,1\}^{\Z^d}$, such that for some $k \in \{0,\dots,k_0-1\}$ and $x \in \Z^d$
\begin{equation*}
    \delta_R(y;\widetilde \eta) \in \big[\zeta_k^-(y),\zeta_k^+(y) \big],\quad \text{for all } y \in B_R(x),
\end{equation*}
it holds that
\begin{equation*}
    \P\Big( \zeta_{k+1}^-(x) \le \delta_R\big(x;\Phi_{0,1}(\widetilde \eta)\big) \le \zeta_{k+1}(x) \Big| \mathcal{F} \Big)\ge 1-2e^{-cV_R^d}
\end{equation*}
for some $c>0$ where $\mathcal{F} = \sigma(\widetilde \eta(x) : x \in \Z^d )\vee \sigma(U(x,0) : x \in \Z^d)$. 
\end{remark}
The uniform (in configurations of $\eta$) concentration result of local $r$-densities of Lemma~\ref{lem:U_k} plays an important role in the following section, where it is used as a building block for the set $G_U$ of good driving noise.

Our proofs of Propositions~\ref{prop:main_proposition} and \ref{prop:speed_bound} are based on two specific choices $\zeta^{\pm,r_0}$, $\zeta^{R,\pm}$ of c.d.p.'s which are slight modifications of the c.d.p.'s introduced in \cite{BARW}. They have the same general shape as sketched in Figure \ref{fig:zeta_sketch} (for $d=1$), but with scale-dependent fronts (i.e.\ the steps in the ``staircase'' that make up the fronts have different lengths for $r=r_0$ and for $r=R$). We describe these in more detail now.

The construction of the c.d.p.'s we work with relies on a family of auxiliary functions, which are constructed explicitly in Lemma~2.5, Lemma~2.6 of \cite{BARW} and the discussion following those results. To introduce these auxiliary functions, let $r\in \N$ and recall the definition of $(\alpha_m)_{m\ge 1}$ from Lemma~\ref{lem:varphi_properties} and the definition of $m_0$ from \eqref{def:m_0}.
With these  objects at hand, we consider for any integer $R_\maximal\in \N$ and $\varepsilon_0 \in (0,\alpha_1)$, $s \in (0,1)$ and $w\ge 2$ the family $(\chi_k^r(x))_{k\ge 0}$ of functions defined as follows. On $\{ \norm{x} \le R_\maximal + m_0r+ k\lceil sr \rceil\}$, we let $\chi_k^r(x) \equiv \alpha_{1}$, and for all $x$ outside of this set, we let
\begin{equation}
    \label{eqn:auxillary_family}
    \chi_k^r(x) =
                    \alpha_{1}\prod_{i=1}^d\min\Big\{\Big( \big(\varepsilon_0/\alpha_1)^{1/d}+ \frac{\widetilde R_k - |x_i|}{\lceil wr \rceil}\Big)\ind_{\{\widetilde R_k \ge |x_i|\}} , 1 \Big\},
\end{equation}
where $\widetilde R_k = R_\maximal +m_0r+ k\lceil sr\rceil + \lceil rR \rceil$, $x_i$ is the $i$-th coordinate of $x$. Note that for $d=1$, the non-constant section of $\chi_k^r$ has a width of $\lceil wr\rceil$, and on this section, the function decreases linearly to the value $\varepsilon_0$. Moreover, in this case, $\chi_{k+1}^r(|x|) = \chi_k^r(|x|-\lceil sr \rceil)$, i.e.\ in the one-dimensional case, increasing $k$ by one shifts the non-constant parts of $\chi_k^r$ to the outside by $\lceil sr\rceil$. Note that these properties also hold along the coordinate axis when $d>1$.

For convenience, we set $R_\maximal^r(k) := R_\maximal + k\lceil sr\rceil $ for $k \in \{0,\dots,k_0\}$ where $k_0$ is any finite value (once the scales are defined properly, we will take $k_0 = L_t$). Note that with this notation, $\chi_k^r\equiv\alpha_1$ on the ball of radius $R_\maximal^r(k)+m_0r$. 
With the help of the family $(\chi_n^r)_{n\ge0}$, we introduce the following functions
\begin{equation}
    \label{eqn:new_zeta_minus}
        \zeta_{k}^{r,-}(x) =\begin{cases}
                            \alpha_{m_0} \quad &\text{if $\norm{x}\le R_\maximal^r(k)$}, \\
                            \alpha_{m_0-j+1} &\text{if $R_\maximal^r(k)+(j-1)r\le\norm{x}\le R_\maximal^r(k)+jr$, $1\le j\le m_0$},\\
                            \chi_k^r(x) &\text{if $\norm{x}\ge R_\maximal^r(k)+m_0r$},
                        \end{cases}
\end{equation}
and
\begin{equation}
    \label{eqn:new_zeta_plus}
        \zeta_{k}^{r,+}(x) =\begin{cases}
                            \beta_{m_0} \quad &\text{if $\norm{x}\le R_\maximal^r(k)$}, \\
                            \beta_{m_0-j+1} &\text{if $R_\maximal^r(k)+(j-1)r\le\norm{x}\le R_\maximal^r(k)+jr$, $1\le j\le m_0$},\\
                            1\vee \beta_1 &\text{if $\norm{x}\ge R_\maximal^r(k)+m_0r$}.
                        \end{cases}
\end{equation}
For the sake of readability, we do not make the dependence of these functions on $R_\maximal$, $(\alpha_{m})_{m\ge0}$, $s$, $w$, $\varepsilon_0$ explicit in the notation. Moreover, $\zeta_k^{r,-}$ is supported on a ball of radius $R_\maximal^r(k)+m_0r+\lceil wr \rceil$ and gives the strongest density control on a ball of radius $R_\maximal^r(k)$. The next result shows that these families of functions are c.d.p.'s for large enough choices of $r$.

\begin{lemma}\label{lem:zeta_is_cdp}
    For any $M\ge 1$ there exists $R_{\mu,M}$ such that for all $R\ge R_{\mu,M}$ and $r \ge \lceil R/M \rceil$ there exists $s \in (0,1)$, $w \ge 2$ and $\varepsilon_0,\delta_0>0$ such that for $R_\maximal \ge 2R$ the family of functions $\zeta^{r,\pm}$ as defined in \eqref{eqn:auxillary_family}--\eqref{eqn:new_zeta_plus} are $(\varepsilon_0,\delta_0,r)$-c.d.p.'s in the sense of Definition~\ref{def:density_profiles}.
\end{lemma}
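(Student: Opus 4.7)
The plan is to mimic the proof of \cite[Lemma~2.6]{BARW}, which establishes an analogous statement for $r=R$, and to track how the argument accommodates the new parameter $r$ satisfying $r \geq \lceil R/M \rceil$. The key point is that Definition~\ref{def:density_profiles} is self-contained and does not involve $R$ explicitly, so one needs only to verify properties (i)--(iii) for the family $\zeta^{r,\pm}$ given in \eqref{eqn:auxillary_family}--\eqref{eqn:new_zeta_plus}; the requirement $r \geq R/M$ enters only through the choice of $R_{\mu,M}$, which makes $r$ large enough for the continuum approximations below to give small enough errors, uniformly in $r \in [\lceil R/M\rceil, R]$.

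Properties (i) and (ii) follow by inspection. For (i) one uses $\alpha_m<\beta_m$ for all $m$ (Lemma~\ref{lem:varphi_properties}(d)) together with $\chi_k^r\le\alpha_1\le 1\vee\beta_1$ on the front. For (ii), $\Supp(\zeta_k^{r,-})$ is contained in the finite box $\{\norm{x}\le \widetilde R_k\}$, and the infimum of $\zeta_k^{r,-}$ on its support equals $\varepsilon_0$, achieved at the corners of this box by the product structure in \eqref{eqn:auxillary_family}. Hence (ii) holds with $\varepsilon=\varepsilon_0$.

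For (iii) one performs a case analysis on the position of $x \in \Supp(\zeta_k^{r,-})$. In the \emph{interior case}, where $B_r(x)\subseteq\{\norm{y}\le R_{\maximal}^r(k)\}$, one has $\rho(y)\in[\alpha_{m_0},\beta_{m_0}]$ for all $y\in B_r(x)$, so by Lemma~\ref{lem:varphi_properties}(d), $\varphi_\mu(\rho(y))\in(\alpha_{m_0+1},\beta_{m_0+1})$. Since $\zeta_{k+1}^{r,\pm}(x)$ equals $\alpha_{m_0},\beta_{m_0}$ at such $x$, and $\alpha_{m_0+1}>\alpha_{m_0}$, $\beta_{m_0+1}<\beta_{m_0}$ strictly with a uniform gap, choosing $\delta_0$ small enough yields the bound. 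In the \emph{staircase case}, where $B_r(x)$ straddles the level jumps of the piecewise-constant part, the step width $r$ ensures that at most two adjacent levels $\alpha_{m_0-j+1},\alpha_{m_0-j}$ (resp.\ $\beta_{m_0-j+1},\beta_{m_0-j}$) appear in $B_r(x)$. Bounding $\rho$ uniformly by these and applying Lemma~\ref{lem:varphi_properties}(c)--(d) produces the same type of uniform-gap bound with the index shifted by one under $\varphi_\mu$.

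The \emph{front case} $\norm{x}\ge R_{\maximal}^r(k)+m_0 r$ is the main obstacle. Here $\chi_k^r$ is small and $\varphi_\mu(w)=\mu w - \mu^2 w^2/2 + O(w^3)$, so the spatial average of $\varphi_\mu(\rho(y))$ over $B_r(x)$ is close to $\mu$ times the spatial average of $\rho(y)$, itself close to $\chi_k^r(x)$ up to an error governed by the gradient of $\chi_k^r$ (of order $1/(wr)$) times $r$. The construction in \eqref{eqn:auxillary_family} is designed precisely so that $\chi_{k+1}^r$ coincides with a shift of $\chi_k^r$ by $\lceil sr\rceil$ along the coordinate axes; one then chooses $s$ small and $w$ large (depending on $\mu$ but not on $r$) so that $(1+\delta_0)\chi_{k+1}^r(x)\le\mu\chi_k^r(x)$ with a margin absorbing both the quadratic correction in $\varphi_\mu$ and the discretisation errors, and the product form of $\chi_k^r$ handles the corner regions in dimension $d>1$ by factoring the analysis coordinate-by-coordinate. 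For the continuum approximation of $V_r^{-d}\sum_{y\in B_r(x)}$ by a ball integral to be accurate enough, $r$ must be sufficiently large, which is ensured by $R \ge R_{\mu,M}$ together with $r \ge R/M$. The upper bound on the front is trivial since $\zeta_{k+1}^{r,+}(x)=1\vee\beta_1$ there, while $\varphi_\mu\le 1$ everywhere.
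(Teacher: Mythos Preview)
Your approach is essentially the same as the paper's: both reduce the verification to the corresponding results in \cite{BARW}, with the sole modification that the averaging scale is $r$ rather than $R$ and that $R_\maximal$ may differ. The paper's proof simply cites Lemmas~2.5 and~4.2 of \cite{BARW} and notes that a trivial adaptation suffices, then defines $R_{\mu,M}$ as the threshold making those lemmas applicable with $\lceil R/M\rceil$; you instead unpack what those lemmas contain by giving the interior/staircase/front case analysis directly. Two small imprecisions are worth correcting: the Taylor expansion of $\varphi_\mu(w)=\mu w e^{-\mu w}$ gives $\mu w-\mu^2 w^2+O(w^3)$ rather than $\mu w-\tfrac12\mu^2 w^2$, and in the staircase case an $\ell^\infty$-ball of radius $r$ can straddle up to three (not two) adjacent levels, though the argument is unchanged since one still bounds by the worst adjacent level and uses Lemma~\ref{lem:varphi_properties}(d).
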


\begin{proof}
    It is a direct consequence of Lemma~2.5 in \cite{BARW} that for large enough $r$ there exists $s \in (0,1), w\ge 2$ and  $\varepsilon_0\in (0,\alpha_1)$ of Lemma~4.2 in \cite{BARW} (literally a trivial modification of this result, as $R_\maximal$ may differ from the corresponding quantity there) that there exists $\delta_0>0$ such that the functions $\zeta^{r,\pm}$ are $(\varepsilon_0,\delta_0,r)$-c.d.p.'s. 
    For fixed $M\ge 1$, the lemma follows by taking $R_{\mu,M}$
    to be the smallest value of $R$ such that Lemma~2.5 and 4.2 of \cite{BARW} are applicable with $\lceil R/M \rceil$.
\end{proof}

Without loss of generality, we can take the same value of $\varepsilon_0,\delta_0>0$ for the two c.d.p.'s $\zeta^{\pm,r_0}$ and $\zeta^{R,\pm}$, where $r_0$ is the proportion of $R$ that is fixed in Section~\ref{sec:proof_speed_prop}. By taking $R$ large enough, Lemma~\ref{lem:zeta_is_cdp} guarantees that both $\zeta^{\pm,r_0}$ and $\zeta^{R,\pm}$ are c.d.p.'s in the sense of Definition~\ref{def:density_profiles}.
Note also that for $r=R$ and up to the length $2R_\maximal$ (i.e.\ of the constant centre section of 
\eqref{eqn:new_zeta_minus}--\eqref{eqn:new_zeta_plus}) these functions correspond exactly to the profiles used in \cite[Section~4]{BARW}.

\section{Proof of Proposition~\ref{prop:main_proposition}}
\label{sec:proof_main}

We now have all the tools in order to prove Proposition~\ref{prop:main_proposition}. We introduce scales $L_s$ and $L_t$ and sets $G_\conf$, $G_U$, in dependence of $R$ and show that with increasing $R$, the properties \ref{prop:item1}--\ref{prop:item2} hold with arbitrarily high probability.

We define the spatial and temporal scales $L_s,L_t$ as follows. First, we introduce $R_\maximal := c_\dens \lceil R\log R \rceil$, where $c_\dens :=1+2c_\ttime$ and $c_\ttime> -(d+1)/\kappa(\mu,\varepsilon_\FP)$ and then choose 
\begin{equation}
    \label{eqn:new_scale}
    L_s  := R_\maximal \quad
  \text{and} \quad L_t := T^\spread + T^\couple,
\end{equation}
where 
\begin{equation}
    \label{eqn:T_spread_and_T_couple}
    T^\spread :=   \big\lceil 3 c_\dens \lceil R \log R \rceil / \lceil sR \rceil \big\rceil,\quad \text{and}\quad T^\couple := c_\ttime \lceil \log R \rceil.
\end{equation}
With this choice of scales, it follows immediately that the support of $\zeta_0^{R,-}$ is contained in $B_{2L_s}(0)$ since ${((m_0+1)R + \lceil wR \rceil)<L_s}$. 

We use the c.d.p.'s $\zeta^{\pm,r_0}$ and $\zeta^{R,\pm}$, cf. \eqref{eqn:new_zeta_minus}--\eqref{eqn:new_zeta_plus}, to define the set $G_\conf$ of good local configurations as follows

\begin{equation}
 \label{eqn:def_G_conf}
   G_\conf := \left\{ \widetilde \eta \in  \{0,1\}^{B_{2L_s}(0)} :  \;
      \parbox{0.638\textwidth}{$
       \zeta_0^{R,-}(y) \le \delta_{R}(y;\widetilde \eta) \le \zeta_0^{R,+}(y)\,\text{ for }\, y \in \Supp( \zeta_0^{R,-}), \text{ and }\\
        \zeta_0^{-,r_0}(y) \le \delta_{r_0}(y;\widetilde \eta) \le \zeta_0^{+,r_0}(y)\,\text{ for }\, y \in \Supp\big( \zeta_0^{-,r_0}\big)$} \: 
      \right\}.
\end{equation}
The set $G_\conf$ should be seen as the property which replaces the \emph{well-startedness} property of \cite{BARW}, cf. \eqref{eqn:well-started2}. We note again that the control on the local $r_0$-densities that we ask for here is a technicality that is needed in Section~\ref{sec:proof_speed_prop} and has no analogue in the renormalisation construction of \cite{BARW}.

For the sake of notational convenience, we also introduce the ``cylinder set'' of $G_\conf$ defined by the configurations that are locally in $G_\conf$,
\begin{equation}
    \label{eqn:def_G_conf_hat}
    \widehat G_\conf := \left\{ \widetilde \eta \in \{0,1\}^{\Z^d} : \widetilde \eta|_{B_{2L_s(0)}} \in G_\conf \right\}.
\end{equation}

The introduction of the set $G_U$ of good driving noise is a bit more subtle. In order for properties~\ref{prop:item1}--\ref{prop:item2} to be satisfied, we require that for any $(x,n) \in \Z^d\times \Z$ such that $\Gamma(x,n) = 1$, cf. \eqref{eqn:oriented_percolation}, the driving noise on $\bblock_{4}(x,n)$ is such that it ensures the following two items:
\begin{enumerate}[label =(\Alph*)]
    \item  Whenever ${\eta_{nL_t}|_{B_{2L_s}{(L_sx)}} \in G_\conf}$ the strongest control of local $R$-densities by the c.d.p.'s $\zeta_k^{R,\pm}$ from \eqref{eqn:new_zeta_minus}--\eqref{eqn:new_zeta_plus} spreads throughout the entire spatial extent of $\bblock_{4}(x,n)$ by time $T^\spread$, and the expanding control of the local $r_0$-densities holds throughout the block. I.e.\
    \begin{align*}
        \alpha_{m_0}\le \delta_R(z ; \eta_{nL_T+T^\spread})\le \beta_{m_0}, \quad & \text{for all }\, z\in B_{4L_s}(L_sx)\\
        \zeta_k^{-,r_0}(z) \le \delta_{r_0}(z;\eta_k)\le \zeta_k^{+,r_0}(z), \quad & \text{for all }\, z\in B_{4L_s}(L_sx), k \in \{0,\dots, L_t\}
    \end{align*}
    
    \label{list:A}
    
    \item Given that the control of $R$-densities has spread as in the first item, the process couples successfully to any reference configuration on $B_{3L_s}(L_sx)$ in an additional $T^\couple$ time steps, where the reference configurations are given by
    \begin{equation}
    \label{eqn:reference_configurations}
        \mathrm{C}_\reff = \left \{ \eta^\reff \in \{0,1\}^{\Z^d} :  | \delta_R(\cdot;\eta^\reff)- \theta_\mu | < \varepsilon_\FP \right \}\subseteq \widehat G_\conf.
\end{equation}
See also  Remark~\ref{rem:new_timescale} above. \label{list:B}
\end{enumerate}
Let us denote the event of \ref{list:A} occurring by $A^\spread(x,n)$ and the event of \ref{list:B} occurring by $A^\couple(x,n)$. The set $G_U$ can then be defined implicitly as the set of driving noise configurations such that
\begin{equation}
    \big\{ U|_{\bblock_{4}(0,0)} \in G_U \big\} = A^\spread(0,0)\cap A^\couple(0,0).
\end{equation}

Note that $T^\spread$ was chosen to be the time that it takes for the c.d.p.'s $\zeta_k^{R,\pm}$ to spread so far that $\zeta_{T^\spread}^{R,-}|_{B_{4L_s}(0)}\equiv \alpha_{m_0}$. The probability of $A^\spread(x,n)$ can easily be bound with the help of Lemma~\ref{lem:U_k}.

\begin{lemma}\label{lem:A_spread}
    It holds that
    \begin{equation*}
    \P\big(A^\spread(x,n)\big) \ge 1-q^{(1)}(R,\mu),\quad (x,n) \in \Z^d\times \Z, 
    \end{equation*}
    where $q^{(1)}(R,\mu) \downarrow 0$ for $R \to \infty$.
\end{lemma}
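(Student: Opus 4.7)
The strategy is to propagate both the $R$-density and the $r_0$-density c.d.p.\ control step by step in time using the uniform concentration estimate of Lemma~\ref{lem:U_k} at each relevant space-time point, and then to close with a union bound. By translation invariance of the driving-noise field $U$, it suffices to estimate $\P(A^\spread(0,0))$, the probability that the noise $U|_{\bblock_4(0,0)}$ enforces \ref{list:A} for \emph{every} initial configuration $\eta_0 \in \widehat G_\conf$.

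For the $R$-densities, suppose $\eta_0 \in \widehat G_\conf$, so that by \eqref{eqn:def_G_conf} we have $\delta_R(y; \eta_0) \in [\zeta_0^{R,-}(y), \zeta_0^{R,+}(y)]$ on $\Supp(\zeta_0^{R,-})$. Inducting over $k = 0, 1, \dots, T^\spread - 1$: if $R$-densities are controlled by $\zeta_k^{R,\pm}$ at time $k$, then for each $y \in \Supp(\zeta_{k+1}^{R,-})$, Definition~\ref{def:density_profiles}\ref{profile:iii} and the flow construction \eqref{eqn:flow_construction} force $\delta_R(y; \eta_{k+1}) \in [\zeta_{k+1}^{R,-}(y), \zeta_{k+1}^{R,+}(y)]$ on the event $\{U|_{B_R(y) \times \{k+1\}} \in \mathcal{U}_k^{R}(\zeta^{R,\pm}; y)\}$, whose complement has probability at most $2e^{-cV_R^d}$ by Lemma~\ref{lem:U_k}. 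By the choice of $L_s$, $R_\maximal$ and $T^\spread$ in \eqref{eqn:new_scale}--\eqref{eqn:T_spread_and_T_couple}, the constant region $\{\zeta_{T^\spread}^{R,-} = \alpha_{m_0}\}$ contains $B_{4L_s}(0)$, which yields the first part of \ref{list:A}.

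For the $r_0$-densities, the same inductive scheme applies over $k = 0, \dots, L_t - 1$, but now Definition~\ref{def:density_profiles}\ref{profile:iii} applied to $\zeta^{\pm, r_0}$ demands that the \emph{$R$}-densities at time $k$ already lie inside the $r_0$-c.d.p.\ interval. Since Lemma~\ref{lem:zeta_is_cdp} allows us to select both families with common parameters $\varepsilon_0, \delta_0, (\alpha_m)_m, (\beta_m)_m$ and common $R_\maximal$, the two intervals coincide in the central constant region (both equal $[\alpha_{m_0}, \beta_{m_0}]$), while the front of $\zeta^{\pm, r_0}$ lies strictly inside the constant region of $\zeta^{R,\pm}$ (the $r_0$-front spreads at rate $\lceil s r_0 \rceil < \lceil sR \rceil$). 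Hence the already-established $R$-density control implies $\delta_R(y; \eta_k) \in [\zeta_k^{-,r_0}(y), \zeta_k^{+,r_0}(y)]$ on $\Supp(\zeta_k^{-,r_0})$, and Lemma~\ref{lem:U_k} provides $r_0$-density control at time $k+1$ up to a failure probability $2e^{-cV_{r_0}^d}$ per space-time point.

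Finally, the union bound is over at most $|B_{4L_s}(0)| \cdot L_t$ space-time events, whose cardinality is polynomial in $R$ (of order $(R\log R)^d \log R$). Each single event has failure probability at most $2e^{-c V_{r_0}^d}$ (the weaker scale), so one may take
\begin{equation*}
q^{(1)}(R, \mu) := C_d (R \log R)^d L_t \cdot e^{-c V_{r_0}^d},
\end{equation*}
and since $r_0$ is a fixed proportion of $R$, $V_{r_0}^d$ grows like $R^d$ and dominates the polynomial prefactor, giving $q^{(1)}(R,\mu) \downarrow 0$ as $R \to \infty$. The only non-routine bookkeeping is the compatibility of the two c.d.p.\ families used in the $r_0$-induction, but this is forced by the common construction in Lemma~\ref{lem:zeta_is_cdp}; no genuine obstacle arises.
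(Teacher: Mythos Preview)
Your proof is correct and follows the same approach as the paper: translation invariance reduces to $(0,0)$, then Lemma~\ref{lem:U_k} propagates the c.d.p.\ control step by step, and a union bound over polynomially-in-$R$ many space-time points against the exponential concentration $e^{-cV_{r_0}^d}$ finishes. The paper's version is terser (packaging the union bound via truncated cones $\mathcal{C}_R,\mathcal{C}_{r_0}$) and does not spell out the compatibility of the two c.d.p.\ families that you rightly flag; your treatment of that point is more careful, though the literal claim that the $r_0$-front lies inside the constant region of $\zeta^{R,\pm}$ fails for the first few time steps---what actually holds throughout is the interval inclusion $[\zeta_k^{R,-},\zeta_k^{R,+}]\subseteq[\zeta_k^{-,r_0},\zeta_k^{+,r_0}]$ on $\Supp(\zeta_k^{-,r_0})$, since the narrower $r_0$-staircase uses the same levels $(\alpha_m),(\beta_m)$.
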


\begin{proof}
    Without loss of generality, we consider only the case $(x,n) = (0,0)$. The proof follows directly by applying a union bound and using Lemma~\ref{lem:U_k}. Indeed,
    if we denote by $\mathcal{C}_R$ the (spatial) $R$-fattening of the (truncated) cone
     $\bigcup_{m=1}^{T^\spread} \Supp(\zeta_m^{R,-})\times \{m\}$  and by $\mathcal{C}_{r_0}$ the (spatial) $r_0$-fattening of the (truncated) cone  $\bigcup_{m=1}^{L_t} \Supp(\zeta_m^{-,r_0})\times\{m\}$ then Lemma~\ref{lem:U_k} (applied once with $r=R$ and once with $r=r_0$) gives
    \begin{align*} 
    1-\P\big(A^\spread\big(0,0)) &\le \P\big(\exists (y,k) \in \mathcal{C}_R  : U|_{B_{R}(y) \times \{k\}} \notin \mathcal{U}_{k}^{R}(y)\big)\\
    & \quad + \P\big(\exists (y,k) \in \mathcal{C}_{r_0}  : U|_{B_{r_0}(y) \times \{k\}} \notin \mathcal{U}_{k}^{r_0}(y) \big)\\
    &\le c T^\spread \textrm{Volume} \big(\mathcal{C}_R\big)\exp(- c' V_R^d) \\
    & \quad + c L_t \textrm{Volume}\big(\mathcal{C}_{r_0}\big) \exp(-c' V_{r_0}^d),
    \end{align*}            
    for some constants $c,c'>0$. As $T^\spread$ and $L_t$ are of order $\log R$ and $\textrm{Volume} (\mathcal{C}_R)$, $\textrm{Volume} (\mathcal{C}_{r_0})$ are polynomial in $R$ (recall $r_0$ is a fixed proportion of $R$), the right hand side of the above display tends to zero as $R$ tends to infinity. 
\end{proof}

The next lemma shows that we have a corresponding bound for the probability of the event $A^\couple(x,n)$.

\begin{lemma}\label{lem:A_couple}
    For large enough $R$, it holds that
    \begin{equation*}
        \P\big( A^\couple(x,n)\big)\ge 1-q^{(2)}(R,\mu),
    \end{equation*}
    where $q^{(2)}(R,\mu)\downarrow 0$ for $R \to \infty$.
\end{lemma}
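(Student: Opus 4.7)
The plan is to couple the true process $\eta$ to an arbitrary but fixed reference $\eta^{\reff} \in \mathrm{C}_\reff$ through the flow $\Phi$ driven by the same noise $U$, and to leverage the contraction of $\varphi_\mu$ on $[\theta_\mu-\varepsilon_\FP,\theta_\mu+\varepsilon_\FP]$ from Lemma~\ref{lem:varphi_properties}(c). On $A^\spread(x,n)$ the $R$-densities of $\eta_{nL_t+T^\spread}$ already lie in $[\alpha_{m_0},\beta_{m_0}]\subseteq[\theta_\mu-\varepsilon_\FP,\theta_\mu+\varepsilon_\FP]$ throughout $B_{4L_s}(L_sx)$, and by definition of $\mathrm{C}_\reff$ the same holds globally for $\eta^{\reff}$, so both processes enter the contraction regime simultaneously. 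Note that for the use of $A^\couple$ inside Proposition~\ref{prop:main_proposition}(i) a single fixed $\eta^{\reff}\in\mathrm{C}_\reff$ suffices: transitivity of equality gives $\eta^{(1)}\leftrightarrow\eta^{\reff}\leftrightarrow\eta^{(2)}$, so I pin down one reference once and for all and do not worry about uniformity over $\mathrm{C}_\reff$.

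Resetting the clock so that $nL_t+T^\spread=0$, I write $\eta^{\reff}_k:=\Phi_{nL_t+T^\spread,\,nL_t+T^\spread+k}(\eta^{\reff})$, keep $\eta_k$ for the true process shifted the same way, and set $E_k:=\{z:\eta_k(z)\neq\eta^{\reff}_k(z)\}$. Introduce shrinking balls $B^{(k)}:=B_{3L_s+(T^\couple-k)R}(L_sx)$ (so that $B^{(T^\couple)}=B_{3L_s}(L_sx)$) and $N_k:=|E_k\cap B^{(k)}|$. The key one‑step bound is: since both $\eta_{k+1}(z)$ and $\eta^{\reff}_{k+1}(z)$ are obtained by thresholding the common $U(z,k+1)$ against $\varphi_\mu$ of the respective $R$‑densities, and since those densities stay in the contraction window (see the obstacle paragraph below),
\begin{equation*}
\P\big(z\in E_{k+1}\,\big|\,\mathcal{F}_k\big)
=\bigl|\varphi_\mu(\delta_R(z;\eta_k))-\varphi_\mu(\delta_R(z;\eta^{\reff}_k))\bigr|
\le \kappa\, V_R^{-d}\,|E_k\cap B_R(z)|.
\end{equation*}
Summing over $z\in B^{(k+1)}$ and exchanging the order of summation, and using $B_R(B^{(k+1)})\subseteq B^{(k)}$, yields $\E[N_{k+1}\mid\mathcal{F}_k]\le \kappa\, N_k$.

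Iterating gives $\E[N_{T^\couple}]\le \kappa^{T^\couple}N_0\le \kappa^{T^\couple}|B^{(0)}|=O\bigl(\kappa^{T^\couple}(R\log R)^d\bigr)$ because $T^\couple R=O(R\log R)=O(L_s)$. With $T^\couple=c_\ttime\lceil\log R\rceil$, the right‑hand side equals $R^{d+c_\ttime\log\kappa}(\log R)^{O(1)}$, which tends to $0$ as $R\to\infty$ whenever $c_\ttime$ is large enough that $d+c_\ttime\log\kappa<0$ — this is exactly the quantitative role of the constant $c_\ttime$ hidden in the scale definition~\eqref{eqn:new_scale}. Markov's inequality then yields $\P(N_{T^\couple}\ge 1)\le\E[N_{T^\couple}]=o(1)$, which is the desired bound on the complement of $A^\couple(x,n)$.

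The principal technical obstacle is the assumption I smuggled into the one‑step bound, namely that the $R$‑densities of \emph{both} processes remain inside $[\theta_\mu-\varepsilon_\FP,\theta_\mu+\varepsilon_\FP]$ not only at $k=0$ but throughout the entire coupling phase $k\in\{0,\dots,T^\couple\}$ on the balls $B^{(k)}$ — otherwise the constant $\kappa<1$ cannot be invoked. The plan is to absorb this into the good‑noise event by further imposing, at each of the $T^\couple$ coupling times and on the shrinking balls, the density concentration event supplied by Lemma~\ref{lem:U_k} applied with the (essentially constant) c.d.p.'s $\zeta^{R,-}_k\equiv\alpha_{m_0}$, $\zeta^{R,+}_k\equiv\beta_{m_0}$ (note that $\varphi_\mu$ maps $[\alpha_{m_0},\beta_{m_0}]$ strictly inside itself by Lemma~\ref{lem:varphi_properties}(c)--(d), so these trivially satisfy the key inequality~\eqref{eqn:xi.comp.bd1}). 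The union bound for their failure gives a contribution of $T^\couple\cdot|B^{(0)}|\cdot 2e^{-cV_R^d}$, which is stretched‑exponentially small in $R$ and thus negligible against the dominant polynomial Markov term above; combining the two contributions produces a $q^{(2)}(R,\mu)\downarrow 0$ as required.
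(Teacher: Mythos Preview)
Your architecture mirrors the paper's: iterate the one-step contraction inherited from Lemma~\ref{lem:varphi_properties}(c) on shrinking balls, conditioned on an event ensuring all $R$-densities stay in $[\theta_\mu-\varepsilon_\FP,\theta_\mu+\varepsilon_\FP]$ throughout the coupling phase, then apply Markov's inequality. The density-maintenance obstacle you identify and your resolution via Lemma~\ref{lem:U_k} are also exactly what the paper does (its event $\Psi_{T^\couple}$), and the arithmetic with $\kappa^{T^\couple}$ against a box of volume $O((R\log R)^d)$ is the same.

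There is, however, a genuine gap in the uniformity. The event $A^\couple(x,n)$ is by construction an event on the driving noise $U|_{\bblock_4(x,n)}$ alone (it is used to \emph{define} $G_U$), and it requires that \emph{every} starting configuration satisfying the post-$A^\spread$ density condition couples to \emph{every} reference in $\mathrm{C}_\reff$. Your transitivity remark correctly eliminates the quantifier over $\eta^{\reff}$, but it leaves untouched the universal quantifier over the starting configuration $\widetilde\eta$ (in Proposition~\ref{prop:main_proposition} the $\eta^{(i)}_{nL_t}$ are \emph{arbitrary} elements of $\widehat G_\conf$, not ``the true process''). Your Markov bound controls $\P(N_{T^\couple}\ge 1)$ for one fixed pair; upgrading to $\P\bigl(\exists\,\widetilde\eta:N_{T^\couple}(\widetilde\eta)\ge 1\bigr)$ by a union bound over configurations is hopeless, since there are $2^{O(L_s^d)}$ of them against a merely polynomial Markov term.

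The paper closes this by running your iteration not on the pair-specific disagreement set $E_k$ but on the uniform quantity $\sum_y\bigl(\sup_{\widetilde\eta\in\widehat G_\conf^{(2)}}\widetilde\eta_k(y)-\inf_{\widetilde\eta\in\widehat G_\conf^{(2)}}\widetilde\eta_k(y)\bigr)$, where the sup/inf range over all admissible starting configurations (these are really max/min over finitely many local configurations, so they are attained and commute with the flow). Your one-step computation goes through verbatim with $E_k$ replaced by $\{z:\sup_{\widetilde\eta}\widetilde\eta_k(z)\neq\inf_{\widetilde\eta}\widetilde\eta_k(z)\}$, and Markov's inequality then delivers the uniform statement directly. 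With that single substitution your proof becomes correct and essentially identical to the paper's.
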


\begin{proof}
Without loss of generality, we set again $(x,n) = (0,0)$. The proof is an adaptation of arguments that can already be found in \cite[Lemma~4.6]{BARW}.

We start with the trivial observation that $A^\couple(0,0)$ gives conditions on the behaviour of particle configurations $\widetilde \eta \in \{0,1\}^{\Z^d}$ which satisfy the local density condition
\begin{equation}
    \label{eqn:density_condtion_A}
 \zeta_{T^\spread}^{R,-}(y)<\delta_R(y; \widetilde \eta) < \zeta_{T^\spread}^{R,+}(y), \quad \text{for all } y \in \Supp(\zeta_{T^\spread}^{R,-}).
\end{equation}
For later convenience, we denote the set of all such particle configurations by $\widehat G_\conf^{(2)}$. Moreover, we use the following convention throughout the proof: $\widetilde \eta_n := \Phi_{0,n}(\widetilde \eta)$ for any $\widetilde \eta \in \{0,1\}^{\Z^d}$.
With this notation, we can write
\begin{equation*}
      A^\couple(0,0)^c = \Big\{ 
      \exists y \in B_{3L_s}(0), \exists \widetilde \eta \in \widehat G_\conf^{(2)}, \exists\eta^\reff \in \mathrm{C}_\reff :  \widetilde \eta_{T^\couple}(y) \neq \eta^\reff_{T^\couple}(y)
      \Big\}
\end{equation*}

The proof of the lemma is based on the following calculation. Recall the definition of the $\sigma$-algebras from \eqref{eqn:sigma_algebra}. For any $y \in \Z^d$ and $k > 1$, cf. \eqref{eqn:sigma_algebra}, it holds by Markov's inequality that
\begin{align}
\begin{split}
    \label{eqn:iteration1}
       \P&\Big(\exists  \eta^{(1)}, \eta^{(2)} \in \widehat G_\conf^{(2)} : \eta^{(1)}_k(y) \neq \eta^{(2)}_k(y) \Big| \mathcal{G}_{0,k} \Big) \le
   \E\Big[ \sup_{\widetilde\eta \in \widehat G_\conf^{(2)}}\widetilde\eta_k(y)-\inf_{\widetilde \eta \in \widehat G_\conf^{(2)}}\widetilde\eta_k(y)\Big |  \mathcal{G}_{0,k}\Big]\\
   &= \E\Big[ \sup_{\widetilde \eta \in \widehat G_\conf^{(2)}} \ind_{\{U(y,k)\le \varphi_\mu(\delta_R(y;\widetilde\eta_{k-1}))\}}-\inf_{\widetilde \eta \in 
 \widehat G_\conf^{(2)}}\ind_{\{U(y,k)\le \varphi_\mu(\delta_R(y;\widetilde\eta_{k-1}))\}} \Big|  \mathcal{G}_{0,k} \Big]\\
   &= \E\Big[ \ind_{\{U(y,k)\le \sup_{\widetilde \eta \in \widehat G_\conf^{(2)}} \varphi_\mu(\delta_R(y;\widetilde\eta_{k-1}))\}}-\ind_{\{U(y,k)\le \inf_{\widetilde \eta \in \widehat G_\conf^{(2)}}\varphi_\mu(\delta_R(y;\widetilde\eta_{k-1}))\}} \Big|  \mathcal{G}_{0,k} \Big]\\ 
   &= \P\Big( U(y,k) \le \big|\sup_{\widetilde \eta \in \widehat G_\conf^{(2)}} \varphi_\mu\big(\delta_R(y;\widetilde\eta_{k-1})\big) - \inf_{\widetilde \eta \in \widehat G_\conf^{(2)}}\varphi_\mu\big(\delta_R(y;\widetilde\eta_{k-1})\big)\big| \Big |  \mathcal{G}_{0,k}\Big)\\
   &=\Big|\sup_{\widetilde \eta \in \widehat G_\conf^{(2)}} \varphi_\mu\big(\delta_R(y;\widetilde\eta_{k-1})\big) - \inf_{\widetilde\eta \in \widehat G_\conf^{(2)}}\varphi_\mu\big(\delta_R(y;\widetilde\eta_{k-1})\big)\Big|
   \end{split}
\end{align}
The second equality holds because the supremum and infinmum are really a maximum and minimum over what happens in the $R$-neighbourhood of $y$, which only involves finitely many local configurations.

Now, if we had uniformly in the configurations in $\widehat G_\conf^{(2)}$ that $|\delta_R(y;\Phi_{0,k-1}(\cdot))-\theta_\mu|<\varepsilon_\FP$, i.e.\ the local density around $x$, at time $k-1$ were uniformly in the region where $\varphi_\mu$ is a contraction, cf.~Lemma~\ref{lem:varphi_properties}(c), then we would get
\begin{align}
\begin{split}
\label{eqn:iteration2}
   \Big|\sup_{\widetilde\eta \in \widehat G_\conf^{(2)}} &\varphi_\mu\big(\delta_R(y;\widetilde\eta_{k-1})\big) - \inf_{\widetilde \eta \in \widehat G_\conf^{(2)}}\varphi_\mu\big(\delta_R(y;\widetilde\eta_{k-1})\big)\Big|\\
&\le 
\kappa(\mu,\varepsilon_\FP) \Big| \sup_{\widetilde \eta \in \widehat G_\conf^{(2)}} \delta_R(y;\widetilde\eta_{k-1})- \inf_{\widetilde \eta \in \widehat G_\conf^{(2)}}\delta_R(y;\widetilde\eta_{k-1}) \Big|\\
&\le \kappa(\mu,\varepsilon_\FP) V_R^{-d}\sum_{y_1 \in B_R(y)} \Big|\sup_{\widetilde\eta \in \widehat G_\conf^{(2)}}\widetilde \eta_{k-1}(y_1)-\inf_{\widetilde \eta \in \widehat G_\conf^{(2)}}\widetilde \eta_{k-1}(y_1)\Big|,
\end{split}
\end{align}
and we could (contingent on having corresponding density control in a slightly larger region, i.e.\ an $R$-fattening of the region, in order to apply Lemma~\ref{lem:varphi_properties}(c) again) iterate this calculation.

In order to formalise the uniform density control that lets us apply Lemma~\ref{lem:varphi_properties}(c) in the above, we introduce for any $r \in \N$ functions $\psi_r: \{0,1\}^{\Z^d}\to \{0,1\}$ with
\begin{equation*}
    \psi_r(\widetilde \eta) = \ind_{\{ \delta_R(z;\widetilde \eta) \in [\theta_\mu-\varepsilon_\FP, \theta_\mu + \varepsilon_\FP]\, \text{ for all }\, z\in B_r(0)\}},\quad \widetilde \eta \in \{0,1\}^{\Z^d},
\end{equation*}
and the event
\begin{equation}
    \label{eqn:Psi_T_couple}
    \Psi_{T^\couple} :=\Big\{\psi_{3L_s+T^\couple \lceil sR \rceil + (T^\couple-l)R}(\widetilde \eta_{T^\couple-l}) = 1 : \,l = 1,\dots, T^\couple, \widetilde \eta \in \widehat G_\conf^{(2)} \Big\}.
\end{equation}
On this event, we have the necessary density control in order to iterate the calculation in \eqref{eqn:iteration1}--\eqref{eqn:iteration2}.

It is not hard to see that $\Psi_{T^\couple}$ has high probability. Note that for any $\widetilde \eta \in \widehat G_\conf^{(2)}$, it holds that $\psi_{4L_s}(\widetilde \eta) = 1$ and 
 and by definition $T^\couple \lceil sR \rceil + T^\couple R = c_\ttime \lceil sR \rceil \lceil \log R \rceil + c_\ttime \lceil \log R \rceil R \le 2c_\dens \lceil R\log R \rceil = L_s$, so that the case $l= T^\couple$ in \eqref{eqn:Psi_T_couple} is satisfied.
Then, using the same argument as in the proof of Lemma~\ref{lem:A_spread}, this density control by the $\zeta^{R,-}_k$ profiles spreads by $\lceil sR\rceil $ in every time step, with high probability, such that
\begin{equation*}
    \P(\Psi_{T^\couple})\ge 1-\widetilde q^{(1)}(R,\mu),
\end{equation*}
for some $\widetilde q^{(1)}(R,\mu)>0$ such that $\lim_{R \to \infty}\widetilde q^{(1)}(R,\mu) = 0$.

With this, we can now prove the lemma. We consider the probability of the complement of $A^\couple(0,0)$ conditioned on $\Psi_{T^\couple}$,
\begin{align*}
     \P\Big(& A^\couple(0,0)^c \Big | \Psi_{T^\couple}\Big) \\
     = &\P \Big( \exists y \in B_{3L_s}(0), \exists \widetilde \eta \in \widehat G_\conf^{(2)}, \exists\eta^\reff \in \mathrm{C}_\reff :  \widetilde \eta_{T^\couple}(y) \neq \eta^\reff_{T^\couple}(y) \Big | \Psi_{T^\couple}\Big).
\end{align*}
Moreover, since $\mathrm{C}_\reff \subseteq \widehat G_\conf^{(2)}$, cf.~\eqref{eqn:reference_configurations} and \eqref{eqn:density_condtion_A}, and writing $T=T^\couple$ in the following, the right hand side of the last display is bounded from above by 
\begin{equation}
     \P\Big( \max_{y \in B_{3L_s}(0)}\big( \sup_{\widetilde \eta \in \widehat G_\conf^{(2)}}\psi_{L_s}(\widetilde \eta_{T})\widetilde \eta_{T}(y)- \inf_{\widetilde \eta \in \widehat G_\conf^{(2)}}\psi_{L_s}(\widetilde \eta_{T})\widetilde \eta_{T}(y)\big) \ge 1 \Big | \Psi_{T} \Big ).
\end{equation}

This probability can now be dealt with as in the iteration of \eqref{eqn:iteration1}--\eqref{eqn:iteration2}. Indeed, for any $k \in \{1,\dots, T\}$, one gets with a union bound and Markov's inequality that

\begin{align*}
    &\E\Big[\P\Big( \max_{y \in B_{3L_s}(0)}\big( \sup_{\widetilde \eta \in \widehat G_\conf^{(2)}}\psi_{L_s}(\widetilde \eta_{k})\widetilde \eta_{k}(y)- \inf_{\widetilde \eta \in \widehat G_\conf^{(2)}}\psi_{L_s}(\widetilde \eta_{k})\widetilde \eta_{k}(y)\big) \ge 1\Big |\mathcal{G}_{0,k}\Big)\Big | \Psi_{T}\Big]\\
    &\le\Big[ \sum_{y \in B_{3L_s}(0)}\E\Big[ \sup_{\widetilde \eta \in \widehat G_\conf^{(2)}}\psi_{L_s}(\widetilde \eta_{k})\widetilde \eta_{k}(y) - \inf_{\widetilde \eta \in \widehat G_\conf^{(2)}}\psi_{L_s}(\widetilde \eta_{k})\widetilde \eta_{k}(y)\big) \Big| \mathcal{G}_{0,k} \Big] \Big |\Psi_{T} \Big] \\
    &\le \kappa(\mu,\varepsilon_\FP) V_R^{-d} \Big[ \sum_{y \in B_{3L_s}(0)} \sum_{y_1 \in B_R(y)}\E\big[ \sup_{\widetilde \eta \in \widehat G_\conf^{(2)}} \widetilde \eta_{k-1}(y_1) - \inf_{\widetilde \eta \in \widehat G_\conf^{(2)}} \widetilde \eta_{k-1}(y_1)\big| \mathcal{G}_{0,k}\big] \Big |\Psi_{T}\Big]\\
    &\le 
    \kappa(\mu,\varepsilon_\FP)^{k} V_R^{-dk} \E\Big[ \sum_{y \in B_{3L_s}(0)} \sum_{y_1 \in B_R(y)}\sum_{y_2 \in B_R(y_1)}\cdots\\
    & \hspace{4cm} 
    \sum_{y_{k} \in B_R(y_{k-1})} \Big(\sup_{ \widetilde \eta \in \widehat G_\conf^{(2)}} \widetilde \eta(y_k)- \inf_{\widetilde \eta \in \widehat G_\conf^{(2)}} \widetilde \eta(y_k) \Big) \Big | \Psi_{T} \Big] \\
    &\le \kappa(\mu,\varepsilon_\FP)^kV_{3L_s}^d.
\end{align*}

By the choice of $c_\ttime$, see above \eqref{eqn:new_scale}, the last line tends to zero as $R$ tends to infinity.
The claim follows, as 
\begin{align*}
        \P\big( A^\couple(0,0)\big) &=\P\big( A^\couple(0,0)\big | \Psi_{T^\couple} \big) 
        \times\P\big(\Psi_{T^\couple} \big)\\
        &\ge \big(1- \kappa(\mu,R)^{T^\couple}V_{3L_s}^d\big)\big(1-\widetilde q^{(1)}(R,\mu)\big),
\end{align*}

the right hand of which can be made to be arbitrarily close to one, by choosing $R$ large.
\end{proof}

By Lemmas~\ref{lem:A_spread} and \ref{lem:A_couple} the proof of Proposition~\ref{prop:main_proposition} follows directly with 
\begin{equation*}
\widetilde R_{\mu,\varepsilon} := \inf\{R>0 : \max\{q^{(1)}(R,\mu),q^{(2)}(R,\mu)\}<\varepsilon\}.
\end{equation*}

\section{Proof of Proposition~\ref{prop:speed_bound}}\label{sec:proof_speed_prop}
Let $(x,n)\in \Z^d \times \Z$ be such that $\Gamma(x,n) = 1$. In order to prove Proposition~\ref{prop:speed_bound}, we require uniform in $z \in B_{L_s/2}(L_sx)$ control of the conditional probabilities
\begin{equation}
    \label{eqn:closeness_RW_eq1}
    P_\eta\Big( \max_{(n-1)L_t<k\le nL_t} \norm{X_k -z} \ge L_s/4 \Big| X_{(n-1)L_t} = z , \Gamma(x,n) = 1\Big).
\end{equation}
Without loss of generality, we assume that $x=0$ and $n =- 1$, such that $z \in B_{L_s/2}(0)$ and set for readability $P_\eta^z(\cdot):= P_\eta(\cdot |X_{0} = z , \Gamma(0,-1) = 1 )$. In order to upper bound \eqref{eqn:closeness_RW_eq1} we split the random walk increments into a martingale and non-martingale part, i.e.\ 
\begin{equation}
    \label{eqn:def_martinglale_part}
    X_k -X_{k-1} = E_\eta^z\big[X_k-X_{k-1}\big| X_{k-1}\big]+ Y_k,
\end{equation}
where $Y_k$ is the martingale part. We can thus write
\begin{equation}
    \label{eqn:split_martingale}
    X_k -z = \sum_{i=1}^k X_i-X_{i-1} = \sum_{i = 1}^k E_\eta^z\big[ X_i-X_{i-1} \big| X_i\big] + \sum_{i=1}^k Y_i.
\end{equation}
Let us introduce the event 
\begin{equation}
    \label{eqn:A_martingale}
    A_\mart := \Big\{ \max_{0<k\le L_t} \Norm{\sum_{i=1}^k Y_i} \ge L_s/8 \Big\}.
\end{equation}
Using \eqref{eqn:split_martingale} and \eqref{eqn:A_martingale}, we can thus bound \eqref{eqn:closeness_RW_eq1} as follows
\begin{equation}
    \label{eqn:split_RW_1}
    \begin{aligned}
        P_\eta^z&\big( \max_{0<k\le L_t} \norm{X_k-z} \ge L_s/4\big) \\ &\le P_\eta^z\Big(\max_{0<k\le L_t} \Norm{ \sum_{i = 1}^k E_\eta^z[ X_i-X_{i-1} | X_i]}+\max_{0<k\le L_t} \Norm{\sum_{i=1}^k Y_i}\ge L_s/4 \Big) \\
        &\le P_\eta^z\Big(\max_{0<k\le L_t}  \sum_{i = 1}^k \norm{E_\eta^z[ X_i-X_{i-1} | X_i]}\ge L_s/8 ,A_\mart^c \Big)
        + P_\eta^z(A_\mart)
        \end{aligned}
\end{equation}

To deal with the first summand in the last line of \eqref{eqn:split_RW_1}, we first claim that  on $A_\mart^c$ it holds for all $k \in \{0,\dots,L_t\}$ that $\norm{X_k-z} \le L_s/2$.
To see this, note first that on $A_\mart^c$ we can write for any $k\in\{1,\dots,L_t\}$ 
\begin{equation}
    \label{eqn:split_RW_2}
    \norm{X_k-z} \le L_s/8 + \sum_{i=1}^k \norm{E_\eta^z[X_i-X_{i-1} | X_{i-1}]}.
\end{equation}
We can bound the sum of expected differences using the following lemma, the proof of which we postpone to the end of this section.
\begin{lemma}
    \label{lem:non_martingale_part}
    For $k \in \{1,\dots, L_t\}$, it holds on $\{\norm{X_{k-1}-z} \le L_s/2\}$ that
    \begin{equation}
      \norm{E_\eta^z[X_k-X_{k-1} | X_{k-1}]} < \frac{L_s}{8L_t}.
    \end{equation}
\end{lemma}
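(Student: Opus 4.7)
The plan is to estimate the conditional drift pointwise by direct computation and then exploit the local density control built into the good block. By \eqref{def:RW}--\eqref{eqn:randomwalk}, for every $x \in \Z^d$ we have
\begin{equation*}
E_\eta^z[X_k - X_{k-1} \mid X_{k-1} = x] \;=\; \frac{\sum_{y \in B_R(x)} (y-x)\,\eta_{-k-1}(y)}{\sum_{y \in B_R(x)} \eta_{-k-1}(y)} \;=:\; \frac{\mathcal N(x)}{\mathcal D(x)}.
\end{equation*}
The denominator is $V_R^d\,\delta_R(x;\eta_{-k-1})$, and under the conditioning $\Gamma(0,-1)=1$ together with $\norm{x}\le L_s$, part (A) in the definition of $G_U$ from Section~\ref{sec:proof_main} forces $\delta_R(x;\eta_{-k-1})\in[\alpha_{m_0},\beta_{m_0}]$, so $\mathcal D(x) \ge \alpha_{m_0} V_R^d$.

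For the numerator I would tile $B_R(x)$ by non-overlapping $r_0$-balls $B_{r_0}(w_j)$, $j=1,\dots,(V_R/V_{r_0})^d$, whose centres $w_j$ form a grid symmetric about $x$, so that $\sum_j (w_j - x) = 0$. Setting $S_{r_0}(w) := V_{r_0}^d\,\delta_{r_0}(w;\eta_{-k-1})$ and $\theta := \tfrac12(\alpha_{m_0}+\beta_{m_0})$ and writing $y-x = (w_j - x) + (y - w_j)$ for $y \in B_{r_0}(w_j)$, one gets
\begin{equation*}
\mathcal N(x) \;=\; \sum_j (w_j - x)\bigl(S_{r_0}(w_j) - \theta V_{r_0}^d\bigr) \;+\; \sum_j \sum_{y \in B_{r_0}(w_j)} (y - w_j)\,\eta_{-k-1}(y),
\end{equation*}
where the $\theta$-contribution has dropped out by symmetry. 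The analogous $r_0$-density control $\delta_{r_0}(w_j;\eta_{-k-1}) \in [\alpha_{m_0},\beta_{m_0}]$, also enforced by $G_U$ through the $\zeta^{\pm,r_0}$-c.d.p.'s, bounds the first sum in sup-norm by $\varepsilon_\FP V_{r_0}^d \sum_j \norm{w_j - x} \le C \varepsilon_\FP R\,V_R^d$; the second sum is dominated term-wise by $r_0\,S_{r_0}(w_j)$, hence globally by $r_0\beta_{m_0} V_R^d$. Dividing by $\mathcal D(x)$ yields
\begin{equation*}
\norm{E_\eta^z[X_k - X_{k-1} \mid X_{k-1} = x]} \;\le\; \frac{C\varepsilon_\FP R + r_0\beta_{m_0}}{\alpha_{m_0}}.
\end{equation*}

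By \eqref{eqn:new_scale}--\eqref{eqn:T_spread_and_T_couple} one has $L_s \asymp R\log R$ and $L_t \asymp \log R$, so that the target $L_s/(8L_t)$ is of order $R$. I would therefore conclude by fixing $r_0$ as a sufficiently small proportion of $R$ and feeding a correspondingly small $\varepsilon_\FP$ into Proposition~\ref{prop:main_proposition} (which enters the notion of goodness through $m_0$, $\alpha_{m_0}$ and $\beta_{m_0}$). The main bookkeeping obstacle is to ensure that the tight $r_0$-density control is active at \emph{every} centre $w_j \in B_R(x) \subseteq B_{L_s+R}(0)$ and at every time $k \in \{1,\dots,L_t\}$; since the $\zeta^{\pm,r_0}$-fronts expand by a fixed amount per step, this is arranged by taking $R_\maximal$ in the c.d.p.\ construction slightly larger than $L_s$, so that already at $k=0$ the strongest-control region covers the whole spatial window the walk can visit.
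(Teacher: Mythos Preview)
Your approach is essentially the paper's: tile $B_R(X_{k-1})$ by $r_0$-subboxes, lower-bound the denominator via the $R$-density control on a good block, and use the $r_0$-density control together with the symmetry $\sum_j (w_j-x)=0$ to bound the numerator; the paper obtains the same two-term estimate (its $\widetilde\varepsilon$ plays the role of your $C\varepsilon_\FP$ contribution) and then fixes the ratio $M=R/r_0$ large so that the bound falls below $L_s/(8L_t)\asymp R$.

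One point to tighten: $\varepsilon_\FP$ is not a tunable input to Proposition~\ref{prop:main_proposition}---it is fixed in Lemma~\ref{lem:varphi_properties}(c), and $m_0$, $\alpha_{m_0}$, $\beta_{m_0}$, the c.d.p.'s and the scales $L_s,L_t$ are all built on top of that choice. What you should say instead is that Lemma~\ref{lem:varphi_properties}(c) permits \emph{any} sufficiently small $\varepsilon_\FP$ (with $\kappa\to|1-\log\mu|<1$ as $\varepsilon_\FP\downarrow0$), so one fixes $\varepsilon_\FP$ small enough \emph{at the outset}, depending only on $\mu$, $d$ and the speed constant $s$, so that $C\varepsilon_\FP/\alpha_{m_0}$ is dominated by the constant hidden in $L_s/(8L_t)\asymp R$; shrinking $\varepsilon_\FP$ enlarges $m_0$ and hence the c.d.p.\ front width, but that is $O(R)$ while $L_s\asymp R\log R$, so taking $R$ large absorbs it. Your closing remark about enlarging $R_\maximal$ so that the tight $r_0$-control already covers $B_{L_s+R}(0)$ at time $k=0$ is a correct and minor bookkeeping fix; the paper relies on the same spatial margin implicitly.
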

Now since $X_0=z$ we can apply Lemma~\ref{lem:non_martingale_part} and it follows with \eqref{eqn:split_RW_2} that $\norm{X_1-z} \le L_s/8 + \tfrac{L_s}{8L_t} < L_s/8+L_s/8 = L_s/4$. Thus, applying  Lemma~\ref{lem:non_martingale_part} and \eqref{eqn:split_RW_2} inductively yields
\begin{equation*}
    \norm{X_k-z} \le L_s/8+ \frac{kL_s}{8L_t} \le L_s/4, \quad k \in \{1,\dots,L_t\}.
\end{equation*}
In particular, this implies that on $A_\mart^c$ that
\begin{equation*}
    \max_{0<k\le L_t}  \sum_{i = 1}^k \norm{E_\eta^z[ X_i-X_{i-1} | X_i]} < L_s/8,
\end{equation*}
which in turn implies that the first sum in the last line of \eqref{eqn:split_RW_1} vanishes.

The probability of $A_\mart$ can be dealt with by applying the Azuma-Hoeffding inequality to the partial sum process of $(Y_k)_{k\ge1}$. More precisely, if $S_k := \sum_{i= 1}^{k} Y_i$ and $S_0 = 0$, then it follows by definition, cf.\ \eqref{eqn:def_martinglale_part}, that $\norm{S_k-S_{k-1}} = \norm{Y_k}\le R$ and thus
\begin{equation*}
    P_\eta^z\big( \norm{S_k-S_0}\ge L_s/8\big) \le \exp\Big(\frac{-(L_s/8)^2}{2L_t R^2} \Big), \quad k \in \{0,\dots,L_t\}.
\end{equation*}
By a union bound, it follows that
\begin{equation*}
  P_\eta^z( A_\mart) \le L_t \exp\Big(\frac{-(L_s/8)^2}{2L_t R^2} \Big).
\end{equation*}
The right hand side of the last display is, by the choice of $L_t$ and $L_s$ in \eqref{eqn:new_scale} of order $O(\log(R)/R)$, which tends to zero for $R$ large. We thus set $R_{\mu,\delta,\varepsilon}$ to be the smallest $R$ such that Proposition~\ref{prop:main_proposition} holds and that the above display is smaller than a given $\delta>0$ as in the statement of Proposition~\ref{prop:speed_bound}, i.e.\ for given $\varepsilon,\delta>0$ we set
\begin{equation*}
 R_{\mu,\delta,\varepsilon} := \inf\big\{R\in \N : R> \widetilde R_{\mu,\varepsilon}, L_t \exp\big(-(L_s/8)^2/(2L_tR^2)\big)<\delta\big\}.
\end{equation*}

It still remains to prove Lemma~\ref{lem:non_martingale_part} in order to complete the proof of Proposition~\ref{prop:speed_bound}.

\begin{proof}[Proof of Lemma~\ref{lem:non_martingale_part}]
    By the definition of a good block (i.e.\ $\Gamma(0,-1) = 1$), the driving noise $U|_{\bblock_4(0,-1)}$ is such that the following local $R$- and $r_0$-density conditions are satisfied for $k \in \{0,\dots,L_t\}$,
    \begin{equation}
    \label{eqn:good_density_control_RW}
    \begin{aligned}
        &\zeta_k^{R,-}(x) \le \delta_R(x; \eta_{-L_t+k})\le \zeta_k^{R,+}, \quad \text{for all }\, x \in \Supp(\zeta_k^{R,-}),\\
        &\zeta_k^{-,r_0}(x) \le \delta_{r_0}(x; \eta_{-L_t+k})\le \zeta_k^{+,r_0}, \quad \text{for all }\, x \in \Supp(\zeta_k^{-,r_0}),
        \end{aligned}
    \end{equation}
and in particular, by the definition of the $\zeta^{R,\pm}_k$,
$\zeta^{\pm,r_0}_k$ profiles, cf.\ \eqref{eqn:new_zeta_minus}--\eqref{eqn:new_zeta_plus}, it holds that 
\begin{equation}
    \label{eqn:good_density_control}
    \delta_R(x;\eta_{k}),\delta_{r_0}(x;\eta_k) \in [\theta_\mu-\varepsilon_\FP,\theta_\mu + \varepsilon_\FP],\quad (x,k) \in B_{L_s}(0)\times \{-L_t,\dots,0\}
\end{equation}

Recall that we assumed without loss of generality that  $r_0$ divides $R$. Let $M>1$ denote the resulting quotient, i.e.\ let $M>1$ be such that $R/r_0 = M$.  Then the ball $B_R(0)$ can be divided into $M^d$ sub-balls of radius $r_0$ centred at points ${y_1,\dots,y_{M^d} \in B_R(0)}$. Now by \eqref{eqn:good_density_control} it follows immediately that for any $m \in \{1,\dots, M^d\}$.
\begin{equation*}
    \begin{aligned}
    P_\eta^z( X_k-X_{k-1} \in B_{r_0}(y_m) | X_{k-1})  &= \frac{\delta_{r_0}( y_m+X_{k-1};\eta_{-k-1}) V_{r_0}^d}{\delta_{R}(X_{k-1};\eta_{-k-1})V_R^d} \\
    &\in \Big[ (1-\widetilde \varepsilon) \frac{1}{M^d} , (1+\widetilde \varepsilon) \frac{1}{M^d}\Big],
    \end{aligned}
\end{equation*}
for some $\widetilde\varepsilon = \widetilde\varepsilon(\varepsilon_\FP,R,M,d)>0$, which decreases to zero for $R \to \infty$.
Moreover, denote on $\{  X_k-X_{k-1} \in B_{r_0}(y_m) \}$ the relative displacement of $X_k-X_{k-1}$ with respect to $y_m$ by $z_m$. In particular, it holds that $\norm{z_m} \le r_0$. Then we have that
\begin{equation}
    \begin{aligned}
    \nnorm{E_\eta^z[X_k-X_{k-1} | X_{k-1}]} 
 &=  \Norm{\sum_{m=1}^{M^d} E_\eta^z\big[(y_m+z_m)\ind_{\{(X_k -X_{k-1}\in B_{r_0}(y_m)\}} | X_{k-1} \big]}\\
 &\le \Norm{\sum_{m=1}^{M^d} E_\eta^z\big[y_m\ind_{\{(X_k -X_{k-1}\in B_{r_0}(y_m)\}} | X_{k-1} \big]} \\ & \quad + \Norm{\sum_{m=1}^{M^d} z_m P_\eta^z(X_k -X_{k-1}\in B_{r_0}(y_m) | X_{k-1} )}\\
 &\le \widetilde \varepsilon +(1+\widetilde \varepsilon)r_0 \le 2r_0.
 \end{aligned}
\end{equation}
 Recall that $L_s = c_\dens \lceil R \log R \rceil$. If we write $c_1$ for the constant such that $L_t= c_1 \log R$, it follows for any fixed $M> 16c_1/c_\dens$  that $2r_0 = 2R/M < L_s/8L_t$.
\end{proof}

\section{Funding} 
  The work was supported by Deutsche Forschungsgemeinschaft through DFG
  project no.\ 443869423 and by Schweizerischer Nationalfonds through SNF
  project no.\ 200021E\_193063 in the context of a joint DFG-SNF
  project within in the framework of DFG priority programme SPP 2265
  Random Geometric Systems.

\bibliographystyle{alpha}
\bibliography{ancestral_BARW_arxiv}

\end{document}